\newtheorem{theo}{Theorem}[section]
\newtheorem{coro}[theo]{Corollary}
\newtheorem{lemm}[theo]{Lemma}
\newtheorem{prop}[theo]{Proposition}
\newtheorem{rema}[theo]{Remark}
\numberwithin{equation}{section}
\begin{document}

\title[stability estimates for an inverse problem for Schr\"odinger operators]{stability estimates for an inverse problem for Schr\"odinger operators at high frequencies from arbitrary partial boundary measurements}
\author{Xiaomeng Zhao and Ganghua Yuan}
\address{KLAS, School of Mathematics and Statistics, Northeast Normal University,
Changchun, Jilin, 130024, China}
\email{zhaoxm600@nenu.edu.cn}
\email{yuangh925@nenu.edu.cn}
\thanks{MSC:35R30, 35J25, 35R25}
\thanks{
The research of GY was supported in part by NSFC grants 11771074 and National Key R\&D Program of China (No. 2020YFA0714102).}

\keywords{inverse problems, stability estimate, Dirichlet-to-Neumann map, arbitrary boundary data}

\begin{abstract}
In this paper, we study the partial data inverse boundary value problem for the Schr\"odinger operator at a high frequency $k\geq 1$ in a bounded domain with smooth boundary in $\mathbb R^n$, $n\geq3$. Assuming that the potential is known in a neighborhood of the boundary, we obtain the logarithmic stability when both Dirichlet data and Neumann data are  taken on arbitrary open subsets of the boundary where the two sets can be disjointed. Our results also show that the logarithmic stability can be improved to the one of H\"older type in the high frequency regime. To achieve those goals, we used a method by combining the CGO sulution, Runge approximation and Carleman estimate.

\end{abstract}

\maketitle

\section{introduction and statement of results}
Let $\Omega\subset\mathbb R^n, n\geq 3$, be a  bounded domain with $C^\infty$ boundary, let $k\geq 1$ and $q\in L^\infty(\Omega)$. We assume throughout this paper that $k^2$ is not an eigenvalue of $-\Delta+q$.  For any $f\in H^{\frac{1}{2}}(\partial\Omega)$, the Dirichlet problem
\begin{align}\label{mm}
	\begin{split}
	(-\Delta-k^2+q)u&=0 \quad\mbox{in} ~~\Omega,\\
	u&=f\quad\mbox{on}~~\partial\Omega,
	\end{split}
\end{align}
has a unique solution $u\in H^1(\Omega)$ and we can introduce the Dirichlet-to-Neumann (DtN) map
\begin{align*}
	\Lambda_q(k):H^{\frac{1}{2}}(\partial\Omega) &\rightarrow H^{-\frac{1}{2}}(\partial\Omega),\\
	f&\mapsto \partial_\nu u.	
\end{align*}
Here, $\nu$ stands for the outward unit normal vector to $\partial\Omega$ and $\partial_\nu u$ stands for $\nabla u\cdot\nu$.

Let $\Gamma_D$ and $\Gamma_N$ be arbitrary nonempty open subsets of $\partial\Omega$. We introduce a special  operator called the partial Dirichlet-to-Neumann (DtN in short) map defined by
\begin{align}
	\begin{split}
	\Lambda_q^{\Gamma_D,\Gamma_N}(k):\widetilde H^{\frac{1}{2}}(\Gamma_D) &\rightarrow H^{-\frac{1}{2}}(\Gamma_N),\\
	u|_{\partial\Omega}=f&\mapsto \partial_\nu u|_{\Gamma_N}.	
	\end{split}
\end{align}
where supp$f \subset \Gamma_D$
, $\widetilde H^{\frac{1}{2}}(\Gamma_D):=\{f\in H^{\frac{1}{2}}(\partial\Omega):{\rm supp}f\subset\Gamma_D\}$.

We are mainly interested in the study of the inverse problem of stably determining the potential $q$ in $\Omega$ from the knowledge of the partial  Dirichlet-to-Neumann map $\Lambda_q^{\Gamma_D,\Gamma_N}(k)$ associated to the problem (\ref{mm}). This kind of problem retrospects back to the original inverse conductivity problem, proposed by Calder\'on, arising in electrical impedance tomography where uniqueness for the linearization can be proven by using the complex exponential solutions \cite{calderon2006inverse}. Later on, the fundamental work by Sylvester and Uhlmann proved the global uniqueness for the inverse potential problem when $\Gamma_D=\Gamma_N=\partial\Omega$ by constructing almost complex exponential solutions, which also yields the global uniqueness of the inverse conductivity problem \cite{sylvester1987global}. Important progress on the partial data problem has been achieved in \cite{bukhgeim2002recovering} and \cite{ammari2004reconstruction}, \cite{imanuvilov2010calderon}, \cite{imanuvilov2011determination}, \cite{imanuvilov2011inverse}, \cite{imanuvilov2015neumann}, \cite{isakov2007uniqueness}, \cite{kenig2014calderon}, \cite{sjostrand2007calderon}, while there are many cases still remaining quite open, see \cite{kenig2014recent} for a review. Up to now, there is no result for both $\Gamma_D$ and $\Gamma_N$ taken on arbitrary open subsets of the boundary except for the case when $\Gamma_D=\Gamma_N$ (see e.g., \cite{ammari2004reconstruction} and \cite{garcia2021runge}), to the best of our knowledge.

Logarithmic stability for such kind of problem was obtained in  \cite{alessandrini1988stable} and was further verified to be optimal in \cite{mandache2001exponential}.  Meanwhile, inverse boundary value problems where the measurements are collected on a certain subset of the boundary are of particular interests and have their own importance in real applications. Various results have been developed in different settings of partial boundaries. Logarithmic stability estimates complementing the uniqueness result of \cite{ammari2004reconstruction}, were proved in \cite{fathallah2007stability}, see also  \cite{alessandrini2012single}, \cite{joud2009stability}, \cite{ruland2019quantitative},  while for the uniqueness result of \cite{isakov2007uniqueness}, logarithmic stability estimates were obtained in \cite{heck2016optimal}. For the uniqueness results of \cite{bukhgeim2002recovering} and \cite{sjostrand2007calderon}, the log-log type stability estimates were established in \cite{heck2006stability}, and in \cite{caro2014stability}, \cite{caro2016sef},  respectively.

The logarithmic stability estimates above indicate the difficulty in designing accurate reconstruction algorithms for the inverse Schr\"odinger potential problem, since small observation noise will lead to large error in reconstruction. Recently, it has been widely observed that in various inverse boundary value problems the stability estimate improves with growing wavenumber. This so-called increasing stability phenomenon for the inverse Schr\"odinger potential problem of (\ref{mm}) demonstrates that the stability estimate improves from a logarithmic type to a H\"older one if the wavenumber is sufficiently large. Such a result was verified in \cite{isakov2010increasings} by careful analysis in different frequency regions with full boundary measurements. Rigorous studies of increasing stability for different inverse potential problems can be found in \cite{garcia2021runge}, \cite{hrycak2004increased}, \cite{isakov2007increasedst}, \cite{isakovl2016increasing}, \cite{isakov2014increasing}, \cite{isakovw2013increasing}, \cite{krupchyk2019stability} and references therein.

As we know, the related increasing stability for the inverse boundary value problem of (\ref{mm}) in the full boundary data case are more  complete than the partial boundary data case. Thus,  there are still many open questions  related to partial data case, especially for arbitrary boundary data. Recently, some results are presented with arbitrary boundary data, such as \cite{krupchyk2019stability} and \cite{garcia2021runge}.  In \cite{krupchyk2019stability}, stability estimates were showed by the knowledge of the partial Robin-to-Dirichlet map at the fixed frequency $k>0$. In this paper, the Dirichlet boundary data are measured on arbitrarily small portion of the boundary $\partial \Omega$, while the Robin boundary data need to be measured on the whole boundary $\partial \Omega$. The complex geometric optics (CGO) solutions and Carleman estimate are used to prove the main results in  \cite{krupchyk2019stability}.  In \cite{garcia2021runge}, the approach of Runge approximation was used to prove stability improvement results in the high frequency limit for the acoustic Helmholtz equation by using partial DtN map with $\Gamma_D=\Gamma_N=\Gamma$, where $\Gamma$ is an arbitrary nonempty open subset of $\partial\Omega$.

Inspired by the references above, in this paper, we combine the methods of CGO solutions, Carleman estimates and Runge approximation to the inverse boundary problem (\ref{mm}) with a large wavenumber in three and higher dimensions. And we obtain increasing stability estimates to determine the potential with partial data. Both Dirichlet data and Neumann data are measured on arbitrarily small portions $\Gamma_D$, $\Gamma_N$ of the boundary, which can be disjoint sets.

To apply Runge approximation, we make additional assumptions:

(a1) $q\in L^\infty(\Omega)$ and zero is not a Dirichlet-eigenvalue of $\Delta-	q$ in $\Omega$,

(a2) ${\rm dist}(k^2,\Sigma_q)>ck^{2-n}$, for some $c\ll1$,
where $\Sigma_q$ denotes the set of the inverse of eigenvalues of the operator $T:=(\Delta-q)^{-1}$.

 As mentioned in \cite{garcia2021runge}, generically (a2) does not pose major restrictions, as it is always possible to find arbitrarily large values of $k$ such that the condition (a2) is fulfilled.
For more details for this, we can refer to \cite{garcia2021runge}.

Now, we state the main result of this paper as follows, which shows that the stability of the partial data inverse boundary problem increases as $k$ becomes large.

\begin{theo}\label{th}
Let $\Omega\subset\mathbb R^n$, $n\geq3$, be a bounded domain with $C^\infty$ boundary and let $\Gamma_D, \Gamma_N\subset\partial\Omega$ be arbitrary nonempty open subsets of $\partial\Omega$. Let $M>0$ and let $q_1, q_2\in L^\infty(\Omega)$ satisfy {\rm (a1)} and $\|q_j\|_{L^\infty(\Omega)}\leq M$, j=1,2. Assume that $q_1=q_2$ in $\mathcal O$, where $\mathcal O\subset\Omega$ is a neighborhood of $\partial\Omega$. Then, there exists a constant $C>1$ such that for all $k\geq 1$ satisfying {\rm (a2)} and $0<\delta:=\|\Lambda_{q_1}^{\Gamma_D,\Gamma_N}(k)-\Lambda_{q_2}^{\Gamma_D,\Gamma_N}(k)\|_{\widetilde H^{\frac{1}{2}}(\Gamma_D) \rightarrow H^{-\frac{1}{2}}(\Gamma_N)}<1$, we have
\begin{align}
	\|q_2-q_1\|_{H^{-1}(\Omega)}\leq e^{Ck}\delta+\frac{C}{\left(k+\log\frac{1}{\delta}\right)^{\frac{2}{n+2}}}.
\end{align}
Here $C>1$ depends on $\Omega$, $\mathcal O$, $M$, $n$ but independent of $k$.
\end{theo}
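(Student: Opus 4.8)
\emph{Proof strategy.}
Since $q_1=q_2$ on the neighbourhood $\mathcal O$ of $\partial\Omega$, I first extend $q_1,q_2$ to a fixed ball $B$ with $\bar\Omega\subset B$ by one common function, so that $q:=q_2-q_1$ is supported in a compact set $K\subset\Omega\setminus\mathcal O$, vanishes near $\partial B$, and $\|q\|_{L^\infty(B)}\le 2M$. Because $q$ has compact support in $\Omega$ one has $\|q\|_{H^{-1}(\Omega)}\le C\|(1+|\xi|^2)^{-1/2}\widehat q\|_{L^2(\mathbb R^n)}$, so that for every $\rho\ge1$
\[
\|q\|_{H^{-1}(\Omega)}^2\le C\int_{|\xi|\le\rho}\frac{|\widehat q(\xi)|^2}{1+|\xi|^2}\,d\xi+\frac{C}{\rho^2}\|q\|_{L^2(\Omega)}^2\le C\int_{|\xi|\le\rho}\frac{|\widehat q(\xi)|^2}{1+|\xi|^2}\,d\xi+\frac{CM^2}{\rho^2}.
\]
Everything then reduces to a frequency-localised bound on $\widehat q$, followed by an optimal choice of $\rho$.

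To bound $\widehat q(\xi)$ for $|\xi|\le\rho$ I construct, in the ball $B$, two families of special solutions of $(-\Delta-k^2+q_j)w_j=0$. For a large parameter $\tau\ge C_0(1+k+|\xi|)$, the Carleman (Faddeev-type) estimate for the conjugated Laplacian yields complex geometric optics solutions $w_j=e^{\zeta_j\cdot x}(1+r_j)$ with $\zeta_1+\zeta_2=-i\xi$, $\zeta_j\cdot\zeta_j=k^2$, $|\mathrm{Re}\,\zeta_j|\sim\tau$, $\|r_j\|_{L^2(B)}\le CM/\tau$ and $\|w_j\|_{H^1(B)}\le Ce^{C\tau}$; for $|\xi|\le 2k$ one may instead use bounded Herglotz-type solutions built from plane waves $e^{i\theta\cdot x}$ with $|\theta|=k$, for which $\|w_j\|_{H^1(B)}\le Ck$ while the limiting-absorption resolvent estimate still gives $\|r_j\|_{L^2(B)}\le CM/k$; it is this last family, available precisely because $k$ is large, that makes the stability improve with $k$. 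These solutions are not admissible test functions for $\Lambda_{q_j}^{\Gamma_D,\Gamma_N}(k)$, since their Dirichlet traces are not supported in $\Gamma_D$ or $\Gamma_N$, and this is where Runge approximation enters. Using {\rm (a1)}--{\rm (a2)} (so that $k^2$ lies in a quantitative resolvent set of $\Delta-q_j$) together with a three-balls / propagation-of-smallness inequality derived from the Carleman estimate, for each $\eta\in(0,1)$ one finds $v_1$ solving $(-\Delta-k^2+q_1)v_1=0$ in $\Omega$ with $\mathrm{supp}(v_1|_{\partial\Omega})\subset\Gamma_D$ and $v_2$ solving $(-\Delta-k^2+q_2)v_2=0$ in $\Omega$ with $\mathrm{supp}(v_2|_{\partial\Omega})\subset\Gamma_N$, such that
\[
\|v_j-w_j\|_{L^2(K)}\le\eta\,\|w_j\|_{H^1(B)},\qquad \|v_j|_{\partial\Omega}\|_{H^{1/2}(\partial\Omega)}\le e^{C\eta^{-\gamma}}\,\|w_j\|_{H^1(B)},
\]
for a fixed exponent $\gamma=\gamma(\Omega,\mathcal O,n)>0$. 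Since the two approximations are carried out independently, the possibility that $\Gamma_D$ and $\Gamma_N$ are disjoint costs nothing — this is the step that carries the argument beyond the case $\Gamma_D=\Gamma_N$.

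Plugging $v_1,v_2$ into the Alessandrini identity, and using that their traces are supported in $\Gamma_D$ resp.\ $\Gamma_N$ and that $k^2$ is not an eigenvalue,
\[
\int_\Omega q\,v_1v_2\,dx=\Big\langle\big(\Lambda_{q_1}^{\Gamma_D,\Gamma_N}(k)-\Lambda_{q_2}^{\Gamma_D,\Gamma_N}(k)\big)(v_1|_{\Gamma_D}),\,v_2|_{\Gamma_N}\Big\rangle,
\]
so that $\big|\int_\Omega q\,v_1v_2\,dx\big|\le\delta\,e^{2C\eta^{-\gamma}}\|w_1\|_{H^1(B)}\|w_2\|_{H^1(B)}$. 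On the other hand $w_1w_2=e^{-i\xi\cdot x}(1+r_1+r_2+r_1r_2)$ and $q$ is supported in $K$, whence
\[
\widehat q(\xi)=\int_\Omega q\,v_1v_2\,dx-\int_K q\,[(v_1-w_1)v_2+w_1(v_2-w_2)]\,dx-\int_K q\,e^{-i\xi\cdot x}(r_1+r_2+r_1r_2)\,dx,
\]
and therefore, inserting the approximation and the remainder bounds,
\[
|\widehat q(\xi)|\le C\big(\delta\,e^{2C\eta^{-\gamma}}+M\eta\big)\|w_1\|_{H^1(B)}\|w_2\|_{H^1(B)}+\frac{CM^2}{\tau}
\]
(with $\tau$ replaced by $k$ in the Herglotz regime). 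The optimisation is then: choose $\eta$ so that $M\eta\|w_1\|_{H^1(B)}\|w_2\|_{H^1(B)}$ matches $M^2/\tau$, which turns $e^{C\eta^{-\gamma}}$ into a fixed power of $e^{C\tau}$ (hence, on the band $|\xi|\le 2k$, a power of $e^{Ck}$), so the data contribution there is $\le e^{Ck}\delta$; in the complex geometric optics band the requirement $\delta\,e^{C\tau}\le M^2/\tau$ forces $\tau$ to stay below a power of $\log\frac1\delta$, which is what limits how large $\rho$ may be taken; inserting these bounds into the frequency integral and balancing the a priori tail $M^2\rho^{-2}$ against the volume of $\{|\xi|\le\rho\}$ yields the optimal choice $\rho\sim\big(k+\log\frac1\delta\big)^{2/(n+2)}$ and the claimed bound
\[
\|q\|_{H^{-1}(\Omega)}\le e^{Ck}\delta+\frac{C}{\big(k+\log\frac1\delta\big)^{2/(n+2)}}.
\]

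The delicate point is the interplay between the quantitative Runge approximation and the growth of the special solutions: the Carleman-induced cost $e^{C\eta^{-\gamma}}$, the factor $e^{C\tau}$ (or $e^{Ck}$) from the construction, and the defect $\delta$ must be reconciled through the parameter choices so that the final estimate is a genuine logarithmic one improving to Hölder type, rather than the log-log estimate that a naive coupling of these ingredients would produce; this is precisely why one uses the bounded Herglotz solutions on the band $|\xi|\lesssim k$ and the exponentially growing solutions only to reach frequencies of size $\sim\log\frac1\delta$. A further point requiring care is the uniformity in $k$ of every constant involved — in the Carleman estimate, in the limiting-absorption resolvent bounds, and, through assumption {\rm (a2)}, in the Runge approximation itself.
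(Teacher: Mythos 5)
Your overall architecture differs from the paper's in one structural respect: you apply Runge approximation to \emph{both} CGO solutions (one pushed into $\Gamma_D$ for the $q_1$-equation, one into $\Gamma_N$ for the $q_2$-equation) and then invoke the Alessandrini identity directly, whereas the paper applies Runge approximation only once (to $u_2$, producing a trace supported in $\Gamma_D$) and reaches the Neumann data on $\Gamma_N$ through a separate mechanism: a Carleman-based quantitative unique continuation estimate (Proposition \ref{pp}) applied to the difference $u=v-\widetilde u_2$, which bounds the interior commutator term $\int[-\Delta,\chi_2]u\,u_1\,dx$ by $e^{-\alpha_1/h}\|u\|_{H^1}+e^{\alpha_2/h}\|u\|_{H^2}^{1/2}\|\partial_\nu u\|_{H^{-1/2}(\Gamma_N)}^{1/2}$. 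Your symmetric double-Runge scheme is in principle a legitimate alternative (it is the natural extension of the $\Gamma_D=\Gamma_N$ argument of Garc\'ia-Ferrero--R\"uland--Zato\'n to two different sets), and the restricted Alessandrini identity you write is correct once both traces are supported in the right sets.

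However, there is a genuine quantitative gap that breaks your optimization. You invoke the Runge approximation with the \emph{exponential} cost $\|v_j|_{\partial\Omega}\|_{H^{1/2}}\le e^{C\eta^{-\gamma}}\|w_j\|_{H^1(B)}$ (the R\"uland--Salo form), and then assert that choosing $\eta$ so that $M\eta\|w_1\|\|w_2\|\sim M^2/\tau$ ``turns $e^{C\eta^{-\gamma}}$ into a fixed power of $e^{C\tau}$.'' It does not: with $\|w_j\|_{H^1}\sim e^{C\tau}$ this choice gives $\eta\sim \tau^{-1}e^{-2C\tau}$, hence $e^{C\eta^{-\gamma}}\sim\exp(C\tau^{\gamma}e^{2C\gamma\tau})$, which is doubly exponential in $\tau$; even in your Herglotz band it is $\exp(Ck^{3\gamma})$, not $e^{Ck}$. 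The constraint $\delta\,e^{Ce^{C\gamma\tau}}\lesssim \tau^{-1}$ then confines $\tau$ to a multiple of $\log\log\frac1\delta$, and the whole argument degenerates to a log-log estimate — exactly the ``naive coupling'' you warn against, which your parameter choices do not actually escape. The ingredient that rescues the argument is the high-frequency Runge approximation of Lemma \ref{le4} (from \cite{garcia2021runge}), whose cost is $Ce^{Ck}\epsilon^{-\mu}$, i.e.\ \emph{polynomial} in $\epsilon^{-1}$ at the price of a single factor $e^{Ck}$; with that bound one may take $\epsilon$ to be a fixed power of $\delta$ (the paper takes $\epsilon=\delta^{1/(\mu+2)}$) so the data term becomes $e^{Ck}\delta^{c}$ while $a\sim k+\log\frac1\delta$ remains admissible. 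Relatedly, the separate Herglotz-wave band $|\xi|\le 2k$ is unnecessary: the paper obtains the gain in $k$ simply by taking the CGO parameter $a\gtrsim k$, so the remainder $C/a$ is automatically $O\bigl((k+\log\frac1\delta)^{-1}\bigr)$ after the final choice of $a$. To make your route work you must replace your Runge bound by the polynomial-cost version and redo the optimization accordingly.
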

\begin{rema}
	The result of Theorem \ref{th} can be summarized informally by saying that at high frequencies, the stability estimate is of H\"older type, modulo an error term with a power-like decay as $k$ becomes large.
\end{rema}
\begin{rema}
The assumption $q_1=q_2$ in $\mathcal O$ is very realistic in most of the applications (see e.g., \cite{ammari2004reconstruction}).
\end{rema}
\begin{rema}
Theorem \ref{th} also implies stability for the  Schr\"odinger operators with $k=0$.  For this case, we use the DtN map to determine the potential $q$ for the equation $(-\triangle+q)u=0$ by assuming that $0$ is not a Dirichlet eigenvalue of $-\triangle+q$ in $\Omega$.
To deal with this problem, we fix a large enough $k_0$ and transform $(-\triangle+q)u=0$ into $(-\triangle-k_0^2+(k_0^2+q))u=0$, then by Theorem \ref{th} we can obtain the corresponding stability result for the  Schr\"odinger operators with $k=0$.
\end{rema}
\begin{rema}
To the best of our knowledge, Theorem \ref{th} is the first stability estimate for the Schr\"odinger equation with both Dirichlet data and Neumann data taken on arbitrary open subsets of the boundary.
\end{rema}
Assuming that the potentials $q_1$ and $q_2$ enjoy some additional regularity properties and a priori bounds, we can obtain the following  corollary of Theorem \ref{th}.
\begin{coro}\label{co}
Let $\Omega\subset\mathbb R^n$, $n\geq3$, be a bounded domain with $C^\infty$ boundary and let $\Gamma_D, \Gamma_N\subset\partial\Omega$ be arbitrary nonempty open subsets of $\partial\Omega$. Let $M>0$,$s>\frac{n}{2}$ and let $q_1, q_2\in H^s(\Omega;\mathbb R)$ satisfy {\rm (a1)} and $\|q_j\|_{H^s(\Omega)}\leq M$, j=1,2. Assume that $q_1=q_2$ in $\mathcal O$, where $\mathcal O\subset\Omega$ is a neighborhood of $\partial\Omega$. Then, there exists a constant $C>1$ such that for all $k\geq 1$ satisfying {\rm (a2)} and $0<\delta:=\|\Lambda_{q_1}^{\Gamma_D,\Gamma_N}(k)-\Lambda_{q_2}^{\Gamma_D,\Gamma_N}(k)\|_{\widetilde H^{\frac{1}{2}}(\Gamma_D) \rightarrow H^{-\frac{1}{2}}(\Gamma_N)}<1$, we have
	\begin{align}
		\|q_2-q_1\|_{L^\infty(\Omega)}\leq \left(e^{Ck}\delta+\frac{C}{\left(k+\log\frac{1}{\delta}\right)^{\frac{2}{n+2}}}\right)^{\frac{s-\frac{n}{2}}{2(s+1)}}.
	\end{align}
Here $C>1$ depends on $\Omega$, $\mathcal O$, $M$, $n$, $s$ but independent of $k$.
\end{coro}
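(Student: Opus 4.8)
The plan is to derive Corollary \ref{co} from Theorem \ref{th} by a frequency-independent Sobolev interpolation, using the extra a priori bound in $H^s$ to upgrade the $H^{-1}$ estimate to an $L^\infty$ one. Write $w:=q_2-q_1$. By assumption $w\in H^s(\Omega)$ with $\|w\|_{H^s(\Omega)}\le 2M$, and $w\equiv 0$ on the neighbourhood $\mathcal O$ of $\partial\Omega$; consequently $\mathrm{supp}\,w$ is a compact subset of $\Omega$ (it is contained in $\overline{\Omega\setminus\mathcal O}\subset\Omega$), so I fix a cutoff $\chi\in C_c^\infty(\Omega)$ with $\chi\equiv1$ on $\overline{\Omega\setminus\mathcal O}$ (a choice depending only on $\Omega$ and $\mathcal O$), which gives $w=\chi w$ on $\Omega$. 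Let $E$ be a bounded Sobolev extension operator for the smooth domain $\Omega$. Then the zero extension $\tilde w$ of $w$ to $\mathbb R^n$ satisfies $\tilde w=\chi\,(Ew)$, hence $\tilde w\in H^s(\mathbb R^n)$ with $\|\tilde w\|_{H^\sigma(\mathbb R^n)}\le C\|w\|_{H^\sigma(\Omega)}$ for the relevant exponents $\sigma\ge0$, and dually $\|\tilde w\|_{H^{-1}(\mathbb R^n)}\le C\|w\|_{H^{-1}(\Omega)}$ (test against $\phi\in H^1(\mathbb R^n)$ and use $\langle\tilde w,\phi\rangle=\langle w,\chi\phi\rangle$ with $\chi\phi\in H^1_0(\Omega)$). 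All these constants depend only on $\Omega,\mathcal O,n,s$. From now on I work on $\mathbb R^n$ and drop the tilde.

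Next comes the interpolation. I choose the exponent
\begin{equation*}
t:=\frac{2s+n}{4},
\end{equation*}
which by the hypothesis $s>\tfrac n2$ satisfies $\tfrac n2<t<s$. Writing $t=-\theta+(1-\theta)s$ gives
\begin{equation*}
\theta=\frac{s-t}{s+1}=\frac{s-\frac n2}{2(s+1)},
\end{equation*}
and H\"older's inequality applied to $\int_{\mathbb R^n}(1+|\xi|^2)^t|\hat w(\xi)|^2\,d\xi$ yields the standard interpolation inequality
\begin{equation*}
\|w\|_{H^t(\mathbb R^n)}\le\|w\|_{H^{-1}(\mathbb R^n)}^{\theta}\,\|w\|_{H^s(\mathbb R^n)}^{1-\theta}.
\end{equation*}
Since $t>\tfrac n2$, the Sobolev embedding $H^t(\mathbb R^n)\hookrightarrow L^\infty(\mathbb R^n)$ holds with an embedding constant depending only on $n$ and $s$; this is the one and only place where it matters that $t-\tfrac n2=\tfrac{2s-n}{4}$ is strictly positive, so that the constant stays finite. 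Combining these two facts with $\|w\|_{H^s(\mathbb R^n)}\le C\|w\|_{H^s(\Omega)}\le 2CM$ gives
\begin{equation*}
\|q_2-q_1\|_{L^\infty(\Omega)}\le\|w\|_{L^\infty(\mathbb R^n)}\le C\,\|w\|_{H^{-1}(\Omega)}^{\theta},
\end{equation*}
with $C$ depending on $\Omega,\mathcal O,M,n,s$ but \emph{not} on $k$.

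Finally I insert the conclusion of Theorem \ref{th}: under (a1), (a2) and $0<\delta<1$,
\begin{equation*}
\|w\|_{H^{-1}(\Omega)}\le e^{Ck}\delta+\frac{C}{\bigl(k+\log\tfrac1\delta\bigr)^{2/(n+2)}}=:\Theta,
\end{equation*}
whence $\|q_2-q_1\|_{L^\infty(\Omega)}\le C\,\Theta^{\theta}$. It then remains only to absorb the multiplicative constant into the bracket: for a fixed $s>\tfrac n2$ the exponent $\theta=\tfrac{s-n/2}{2(s+1)}$ is a fixed positive number, so $C\,\Theta^{\theta}=\bigl(C^{1/\theta}\Theta\bigr)^{\theta}$, and after enlarging $C$ (which we may assume $\ge1$) one has $C^{1/\theta}\Theta\le e^{C'k}\delta+\tfrac{C'}{(k+\log(1/\delta))^{2/(n+2)}}$ for $k\ge1$, since $C^{1/\theta}e^{Ck}\le e^{C'k}$. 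This is exactly the asserted estimate.

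There is no substantive obstacle in this argument — it is pure Sobolev interpolation — and the only two points worth stating carefully are already flagged above: the comparison between Sobolev norms on $\Omega$ and on $\mathbb R^n$, which is legitimate precisely because $w$ vanishes near $\partial\Omega$, and the fact that every constant produced along the way (the extension/restriction constants, the interpolation inequality, the Sobolev embedding, and the constant absorption) is independent of the frequency $k$, as required by the statement.
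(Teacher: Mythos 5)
Your proof is correct and follows essentially the same route as the paper: Sobolev embedding $H^{t}\hookrightarrow L^\infty$ with the intermediate exponent $t=\frac{2s+n}{4}$ (the paper writes it as $\frac n2+\varepsilon$ with $s=\frac n2+2\varepsilon$), interpolation between $H^{-1}$ and $H^{s}$ giving the exponent $\frac{s-n/2}{2(s+1)}$, the a priori bound $\|q_j\|_{H^s}\le M$, and then Theorem \ref{th}. Your extra care in justifying the $H^{-1}$--$H^{s}$ interpolation on the bounded domain via the cutoff and extension (legitimate because $q_1-q_2$ vanishes near $\partial\Omega$) is a welcome elaboration of a step the paper takes for granted, but it is not a different argument.
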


The rest of the paper is organized as follows: after briefly recalling some auxiliary results in Section 2, we turn to the quantitative unique continuation estimate which will be proved by Carleman estimates for the operator $-\Delta-k^2$, $k\geq1$ in Section 3. In Section 4, we prove partial data stability for the Schr\"odinger equation at a large frequency with a priori information in a boundary layer. Finally, in Appendix, we give a sketch of the proof of Lemma \ref{c1}, the Carleman estimate which will be used in the paper.

\section{preliminaries}
To begin with, we state the existence of CGO solutions for (\ref{mm}). These special solutions are first constructed in \cite{sylvester1987global}. Another construction based on the Fourier series is given in  \cite{hahner1996periodic}.
\begin{lemm}\label{le1}
	Let $\widetilde\Omega\subset\mathbb R^n$ be a bounded domain. Let $q \in L^\infty(\widetilde\Omega)$ and $k \geq 0$. Then there are constants $C_0>0$ and $C_1>0$, depending on $\widetilde\Omega$ and $n$ only, such that for all $\zeta \in \mathbb{C}^n$, $\zeta\cdot\zeta=k^2$, and $|{\rm Im}\zeta|\geq\max\{C_0\|q\|_{L^\infty(\widetilde\Omega)}, 1\}$, the equation $$(-\Delta-k^2+q)u=0 \quad\mbox{in} ~~\widetilde\Omega$$has a solution $$u(x)=e^{{\rm i}\zeta\cdot x}(1+r(x)),$$where $r\in L^2(\widetilde\Omega)$ satisfies$$\|r\|_{L^2(\widetilde\Omega)}\leq\frac{C_1}{|\rm{Im}\zeta|}\|q\|_{L^\infty(\widetilde\Omega)}.$$
\end{lemm}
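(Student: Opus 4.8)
This is the classical Sylvester--Uhlmann complex geometric optics construction \cite{sylvester1987global} (see also the Fourier-series variant of \cite{hahner1996periodic}), and the plan is simply to recall that argument, which has three steps; note first that the constant $k^{2}$ plays no real role, since it is exactly cancelled by the constraint $\zeta\cdot\zeta=k^{2}$.

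\textbf{Step 1: reduction to a remainder equation.} Substituting $u=e^{\mathrm{i}\zeta\cdot x}(1+r)$ and using $e^{-\mathrm{i}\zeta\cdot x}\Delta(e^{\mathrm{i}\zeta\cdot x}w)=\Delta w+2\mathrm{i}\zeta\cdot\nabla w-(\zeta\cdot\zeta)w$ with $\zeta\cdot\zeta=k^{2}$, one sees that $u$ solves $(-\Delta-k^{2}+q)u=0$ in $\widetilde\Omega$ exactly when
$$(-\Delta-2\mathrm{i}\zeta\cdot\nabla)r=-q(1+r)\qquad\text{in }\widetilde\Omega .$$
Since only the restriction to the bounded set $\widetilde\Omega$ is needed, I extend $q$ by $0$ to an open cube $Q\supset\overline{\widetilde\Omega}$, preserving $\|q\|_{L^{\infty}}$, and look for $r\in L^{2}(Q)$ solving this equation on $Q$.

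\textbf{Step 2: a solution operator with a gain in $|\mathrm{Im}\,\zeta|$.} The analytic core is the construction of a right inverse $G_{\zeta}$ of $L_{\zeta}:=-\Delta-2\mathrm{i}\zeta\cdot\nabla$ satisfying
$$\|G_{\zeta}f\|_{L^{2}(Q)}\le\frac{C}{|\mathrm{Im}\,\zeta|}\,\|f\|_{L^{2}(Q)},\qquad |\mathrm{Im}\,\zeta|\ge 1 ,$$
with $C=C(Q,n)$. On the torus $Q$ this is H\"ahner's lemma \cite{hahner1996periodic}: one inverts $L_{\zeta}$ modewise against the symbol $|m|^{2}+2\zeta\cdot m$ on the dual lattice of $Q$, a perturbation argument taking care of the modes where it nearly vanishes. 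Alternatively, following \cite{sylvester1987global}, one works on $\mathbb{R}^{n}$ in the weighted spaces $L^{2}_{\delta}$, $-1<\delta'<0<\delta<1$, obtains $\|G_{\zeta}f\|_{L^{2}_{\delta'}}\le C|\mathrm{Im}\,\zeta|^{-1}\|f\|_{L^{2}_{\delta}}$, and passes between weighted and ordinary norms using the compact support of $q$. I would quote this estimate rather than reprove it; the subtlety it encapsulates is that the symbol $|\xi|^{2}+2\zeta\cdot\xi$ has a nontrivial zero set, so the $|\mathrm{Im}\,\zeta|^{-1}$ decay is invisible from a crude pointwise bound on its reciprocal.

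\textbf{Step 3: contraction mapping.} Set $T(r):=-G_{\zeta}(q(1+r))$ on $L^{2}(Q)$. By Step 2, $\|T(r_{1})-T(r_{2})\|_{L^{2}(Q)}\le C\,|\mathrm{Im}\,\zeta|^{-1}\,\|q\|_{L^{\infty}}\,\|r_{1}-r_{2}\|_{L^{2}(Q)}$, so $T$ is a contraction provided $|\mathrm{Im}\,\zeta|\ge C_{0}\|q\|_{L^{\infty}}$ with $C_{0}:=2C$; together with the normalization $|\mathrm{Im}\,\zeta|\ge1$ this is precisely the hypothesis of the lemma. Its unique fixed point $r\in L^{2}(Q)$ solves the remainder equation, so $u=e^{\mathrm{i}\zeta\cdot x}(1+r)$ is the required solution, and from $r=T(r)$ one gets $\|r\|_{L^{2}(Q)}\le 2\|G_{\zeta}(q)\|_{L^{2}(Q)}\le 2C|\mathrm{Im}\,\zeta|^{-1}\|q\|_{L^{2}(\widetilde\Omega)}\le C_{1}|\mathrm{Im}\,\zeta|^{-1}\|q\|_{L^{\infty}(\widetilde\Omega)}$, with $C_{1}=2C|\widetilde\Omega|^{1/2}$; restricting to $\widetilde\Omega$ finishes the proof. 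The main obstacle is honestly Step 2 (the resolvent estimate with the $|\mathrm{Im}\,\zeta|^{-1}$ gain), but it is standard and I would import it; the only thing to be careful about in the write-up is that $C_{0}$ and $C_{1}$ depend on $\widetilde\Omega$ and $n$ alone, which is immediate from the construction.
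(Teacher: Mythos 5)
Your proposal is correct and follows exactly the route the paper itself takes: the paper gives no proof of this lemma but simply cites the Sylvester--Uhlmann construction and H\"ahner's Fourier-series variant, which is precisely the reduction-to-remainder-equation, quoted $|\mathrm{Im}\,\zeta|^{-1}$ resolvent estimate, and contraction argument you describe. The dependence of $C_0$, $C_1$ only on $\widetilde\Omega$ and $n$, and the final bound via $\|q\|_{L^2(\widetilde\Omega)}\le|\widetilde\Omega|^{1/2}\|q\|_{L^\infty(\widetilde\Omega)}$, are handled correctly.
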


Let us recall the following standard elliptic regularity result from \cite[Theorem 7.1]{zworski2022semiclassical}.
\begin{lemm}\label{le2}
	Let $\Omega \subset\subset \widetilde{\Omega}\subset \mathbb{R}^n$, $q\in L^\infty(\widetilde{\Omega})$, and $k\geq0$. Let $u\in L^2(\widetilde{\Omega})$ be a solution to $$(-\Delta-k^2+q)u=0 \quad\mbox{in} ~~\widetilde{\Omega}.$$Then $u\in H^2(\Omega)$ and we have the following bounds,$$\|u\|_{H^1(\Omega)}\leq C(1+k)\|u\|_{L^2(\widetilde\Omega)}.$$
\end{lemm}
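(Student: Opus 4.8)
The plan is to read the equation as a perturbed Poisson problem and to split the two assertions: the qualitative membership $u\in H^2(\Omega)$ will come from interior elliptic regularity, while the quantitative bound with the \emph{sharp} factor $(1+k)$ will come from a Caccioppoli energy estimate. First I would rewrite the PDE as $-\Delta u=f$ with $f:=(k^2-q)u$. Since $u\in L^2(\widetilde\Omega)$ and $q\in L^\infty(\widetilde\Omega)$, we have $f\in L^2(\widetilde\Omega)$, so $\Delta u\in L^2(\widetilde\Omega)$ in the distributional sense. Classical interior elliptic regularity for the Laplacian then gives $u\in H^2_{\mathrm{loc}}(\widetilde\Omega)$, and since $\Omega\subset\subset\widetilde\Omega$ this yields $u\in H^2(\Omega)$, proving the first claim. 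A useful by-product is that this already legitimizes the integration by parts used in the second step.

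For the quantitative bound, the naive route---inserting $\|f\|_{L^2}\leq(k^2+\|q\|_{L^\infty})\|u\|_{L^2}$ into an $H^2$ elliptic estimate---loses a power of $k$, giving only $(1+k^2)$. To recover $(1+k)$ I would instead run an energy argument. Fix a cutoff $\chi\in C_c^\infty(\widetilde\Omega)$ with $0\leq\chi\leq1$, $\chi\equiv1$ on a neighborhood of $\overline\Omega$, and $\mathrm{supp}\,\chi\subset\subset\widetilde\Omega$. Testing $-\Delta u=(k^2-q)u$ against $\chi^2\bar u$, integrating by parts (no boundary terms, as $\chi$ is compactly supported), and taking real parts gives the identity
\[
\int_{\widetilde\Omega}\chi^2|\nabla u|^2\,dx=-2\,\mathrm{Re}\int_{\widetilde\Omega}\chi\,\bar u\,\nabla\chi\cdot\nabla u\,dx+\int_{\widetilde\Omega}(k^2-\mathrm{Re}\,q)\,\chi^2|u|^2\,dx.
\]
Estimating the cross term by Young's inequality, $2|\chi\bar u\,\nabla\chi\cdot\nabla u|\leq\tfrac12\chi^2|\nabla u|^2+2|\nabla\chi|^2|u|^2$, absorbing the first half-term on the left, and bounding $k^2-\mathrm{Re}\,q\leq k^2+\|q\|_{L^\infty(\widetilde\Omega)}$, I would arrive at
\[
\tfrac12\int_{\widetilde\Omega}\chi^2|\nabla u|^2\,dx\leq\bigl(2\|\nabla\chi\|_{L^\infty}^2+k^2+\|q\|_{L^\infty(\widetilde\Omega)}\bigr)\|u\|_{L^2(\widetilde\Omega)}^2\leq C(1+k^2)\|u\|_{L^2(\widetilde\Omega)}^2.
\]

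Since $\chi\equiv1$ on $\Omega$, the left side dominates $\tfrac12\|\nabla u\|_{L^2(\Omega)}^2$. Combining this with the trivial bound $\|u\|_{L^2(\Omega)}\leq\|u\|_{L^2(\widetilde\Omega)}$ and using $\sqrt{1+k^2}\leq C(1+k)$, I obtain $\|u\|_{H^1(\Omega)}\leq C(1+k)\|u\|_{L^2(\widetilde\Omega)}$, with $C$ depending only on $\Omega$, $\widetilde\Omega$, $n$ and $\|q\|_{L^\infty(\widetilde\Omega)}$ (through $\chi$ and the bound on $q$). The main obstacle is exactly this bookkeeping of the $k$-dependence: the crucial feature is that the large zeroth-order coefficient $k^2$ enters the energy identity paired with $|u|^2$ rather than with $|\nabla u|^2$, so it costs only a single power of $k$ after taking the square root---which is precisely why the energy method, and not a direct appeal to the $H^2$ estimate, produces the sharp factor.
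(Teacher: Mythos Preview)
Your argument is correct: the interior elliptic regularity step gives $u\in H^2_{\mathrm{loc}}(\widetilde\Omega)$ and hence $u\in H^2(\Omega)$, and the Caccioppoli-type energy identity with the cutoff $\chi$ delivers exactly the factor $(1+k)$ rather than $(1+k^2)$, for precisely the reason you isolate---the large coefficient $k^2$ multiplies $|u|^2$ and not $|\nabla u|^2$.

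As for comparison with the paper: the paper does not prove this lemma at all; it simply quotes it as a standard result from \cite[Theorem~7.1]{zworski2022semiclassical}. Zworski's Theorem~7.1 is the semiclassical elliptic estimate, phrased in terms of the semiclassical parameter $h=(1+k)^{-1}$ and obtained via a pseudodifferential parametrix construction; in that framework the gain of one power of $(1+k)$ appears as the semiclassical Sobolev-scale shift from $H^0_h$ to $H^1_h$. Your energy/Caccioppoli proof is a genuinely more elementary route: it avoids any microlocal machinery, is self-contained in a few lines, and makes the $k$-dependence completely transparent. The semiclassical approach, on the other hand, generalizes effortlessly to higher Sobolev indices and to variable-coefficient operators, which is why it is the natural reference in a paper that already uses semiclassical Carleman estimates elsewhere. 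For the specific statement at hand, your argument is entirely adequate and arguably preferable for its directness.
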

Next we introduce the Runge approximation in \cite{garcia2021runge} for the Schr\"odinger equation with a high frequency under the assumptions (a1) and (a2).

\begin{lemm}\label{le4}
Let $\Omega_1, \widetilde\Omega_1, \Omega_2\subset\mathbb{R}^n$ be  bounded Lipschitz domains such that $\Omega_1\Subset\widetilde\Omega_1\Subset\Omega_2$ and $\Omega_2\backslash\overline{\Omega_1}$ is connected. Let $\Gamma_D$ be a nonempty, open subset of $\partial\Omega_2$. Let $q$ satisfies ${\rm (a1)}$ in $\Omega_2$. Then there exist constants $\mu>1$ and $C>1$ depending on $n, \Omega_2, \Omega_1, \|q\|_{L^\infty(\Omega)}$ such that for any solution $v\in H^1(\widetilde\Omega_1)$ of $$(-\Delta-k^2+q)v=0 \quad\mbox{in} ~~\widetilde\Omega_1,$$with $k\geq1$ satisfying ${\rm(a2)}$, and any $\epsilon>0$, there exists a solution $u$ to$$(-\Delta-k^2+q)u=0 \quad\mbox{in} ~~\Omega_2$$with $u|_{\partial\Omega_2}\in \widetilde H^{\frac{1}{2}}(\Gamma_D)$ such that $$\|u-v\|_{L^2(\Omega_1)}\leq \epsilon \|v\|_{H^1(\widetilde\Omega_1)}, $$ $$\quad \|u\|_{H^{\frac{1}{2}}(\Gamma_D)}\leq Ce^{Ck}\epsilon^{-\mu}\|v\|_{L^2(\Omega_1)}.$$
\end{lemm}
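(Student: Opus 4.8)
The plan is to prove Lemma~\ref{le4} in two stages: first establish the \emph{qualitative} Runge density by a duality argument resting on unique continuation, and then quantify it through a Tikhonov-regularized minimization whose optimality system is controlled by a frequency-explicit propagation-of-smallness estimate coming from a Carleman estimate for $-\Delta-k^2$.

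For the qualitative step, write $P:=-\Delta-k^2+q$ and let $A\colon\widetilde H^{\frac12}(\Gamma_D)\to L^2(\Omega_1)$, $g\mapsto u_g|_{\Omega_1}$, where $u_g\in H^1(\Omega_2)$ solves $Pu_g=0$ in $\Omega_2$ with $u_g|_{\partial\Omega_2}=g$; by {\rm (a1)}--{\rm (a2)}, $k^2$ is not a Dirichlet eigenvalue, so $u_g$ is well defined, and the Dirichlet solution operators for $P$ are bounded with norm controlled polynomially in $k$ (this is the quantitative role of {\rm (a2)}). For $f\in L^2(\Omega_1)$ extended by zero, integration by parts gives $\langle f,Ag\rangle_{L^2(\Omega_1)}=\int_{\Gamma_D}\partial_\nu\phi_f\,g$, where $P\phi_f=f$ in $\Omega_2$, $\phi_f|_{\partial\Omega_2}=0$; hence $f\perp\operatorname{ran}A$ forces $\partial_\nu\phi_f|_{\Gamma_D}=0$. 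As $f$ is supported in $\overline{\Omega_1}$, $\phi_f$ solves $P\phi_f=0$ on the \emph{connected} set $\Omega_2\setminus\overline{\Omega_1}$ with vanishing Cauchy data on the open piece $\Gamma_D$, so unique continuation yields $\phi_f\equiv0$ there, in particular $\phi_f=\partial_\nu\phi_f=0$ near $\partial\widetilde\Omega_1$; then for any solution $v$ of $Pv=0$ in $\widetilde\Omega_1$, $\langle f,v\rangle_{L^2(\Omega_1)}=\int_{\partial\widetilde\Omega_1}(\partial_\nu\phi_f\,v-\phi_f\,\partial_\nu v)=0$, i.e.\ $v|_{\Omega_1}\in\overline{\operatorname{ran}A}$ in $L^2(\Omega_1)$.

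To quantify, fix such a $v$, a parameter $\alpha>0$ and an intermediate domain $\Omega_1\Subset\Omega''\Subset\widetilde\Omega_1$, and let $g_\alpha$ minimize $J_\alpha(g):=\|Ag-v\|_{L^2(\Omega_1)}^2+\alpha\|g\|_{H^{1/2}(\Gamma_D)}^2$. The Euler--Lagrange equation gives $\alpha\|g_\alpha\|_{H^{1/2}}=\|\partial_\nu\psi_\alpha|_{\Gamma_D}\|_{H^{-1/2}(\Gamma_D)}$ and $J_\alpha(g_\alpha)=-\langle Ag_\alpha-v,v\rangle_{L^2(\Omega_1)}=-\int_{\partial\Omega''}(\partial_\nu\psi_\alpha\,v-\psi_\alpha\,\partial_\nu v)$, where $P\psi_\alpha=(u_{g_\alpha}-v)\mathbf 1_{\Omega_1}$ in $\Omega_2$, $\psi_\alpha|_{\partial\Omega_2}=0$. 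Since $u_{g_\alpha}-v$ is supported in $\overline{\Omega_1}$, $\psi_\alpha$ solves $P\psi_\alpha=0$ in $\Omega_2\setminus\overline{\Omega_1}$, and iterating a Carleman estimate for $-\Delta-k^2$ along a geometry-determined chain of balls inside this connected set, from $\Gamma_D$ to a fixed neighbourhood of $\partial\Omega''$, yields a H\"older-type inequality
\[
\|\psi_\alpha\|_{H^{3/2}(\mathrm{nbhd}\,\partial\Omega'')}\le Ce^{Ck}\,\|\partial_\nu\psi_\alpha|_{\Gamma_D}\|_{H^{-1/2}(\Gamma_D)}^{\theta}\,\|\psi_\alpha\|_{H^1(\Omega_2)}^{1-\theta},\qquad\theta\in(0,1).
\]
Using this together with $\|\psi_\alpha\|_{H^1(\Omega_2)}\le C(k)\|Ag_\alpha-v\|_{L^2(\Omega_1)}\le C(k)J_\alpha(g_\alpha)^{1/2}$ ($C(k)$ polynomial, by {\rm (a2)}), with $\alpha\|g_\alpha\|_{H^{1/2}}=\|\partial_\nu\psi_\alpha|_{\Gamma_D}\|_{H^{-1/2}}\le(\alpha J_\alpha(g_\alpha))^{1/2}$, and with the trace estimate $\bigl|\int_{\partial\Omega''}(\cdots)\bigr|\le C\|\psi_\alpha\|_{H^{3/2}(\mathrm{nbhd}\,\partial\Omega'')}\|v\|_{H^1(\widetilde\Omega_1)}$, the identity for $J_\alpha(g_\alpha)$ closes into $J_\alpha(g_\alpha)^{1/2}\le Ce^{Ck}\alpha^{\theta/2}\|v\|_{H^1(\widetilde\Omega_1)}$ (absorbing the polynomial $k$-factors into $e^{Ck}$). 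Hence $\|Ag_\alpha-v\|_{L^2(\Omega_1)}\le Ce^{Ck}\alpha^{\theta/2}\|v\|_{H^1(\widetilde\Omega_1)}$, while the trivial minimality bound $\alpha\|g_\alpha\|_{H^{1/2}}^2\le J_\alpha(0)=\|v\|_{L^2(\Omega_1)}^2$ gives $\|g_\alpha\|_{H^{1/2}}\le\alpha^{-1/2}\|v\|_{L^2(\Omega_1)}$. Choosing $\alpha$ with $Ce^{Ck}\alpha^{\theta/2}=\epsilon$ makes the first estimate $\|Ag_\alpha-v\|_{L^2(\Omega_1)}\le\epsilon\|v\|_{H^1(\widetilde\Omega_1)}$, and the second becomes $\|g_\alpha\|_{H^{1/2}(\Gamma_D)}\le Ce^{Ck}\epsilon^{-\mu}\|v\|_{L^2(\Omega_1)}$ with $\mu=\tfrac1\theta>1$; taking $u:=u_{g_\alpha}$ finishes.

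The main obstacle is the uniformity in $k$: one must check, via {\rm (a2)}, that the Dirichlet resolvent of $-\Delta-k^2+q$ and the associated solution operators are bounded with constants polynomial in $k$, and—more delicately—that the Carleman estimate for $-\Delta-k^2$ can be taken with a weight (e.g.\ in semiclassical form) whose constants degrade only exponentially in $k$, so that its iteration along the fixed chain of balls accumulates a factor $e^{Ck}$ rather than something worse, while the H\"older exponent $\theta$, hence $\mu$, stays in a fixed range depending on the geometry alone. Once this frequency-explicit quantitative unique continuation is available, the minimization and the balancing of $\alpha$ are routine.
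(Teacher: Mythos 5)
The paper offers no proof of Lemma~\ref{le4}: it is imported verbatim from \cite{garcia2021runge} (``we introduce the Runge approximation in [GRZ]\dots''), so there is no in-house argument to compare against. Your proposal is a correct reconstruction of the proof given there, which in turn follows the scheme of \cite{ruland2019quantitative}: qualitative Runge density by Hahn--Banach duality plus unique continuation for the adjoint state in the connected set $\Omega_2\setminus\overline{\Omega_1}$, then Tikhonov regularization whose optimality identities ($J_\alpha(g_\alpha)=-\langle Ag_\alpha-v,v\rangle$ and $\alpha\|g_\alpha\|_{\widetilde H^{1/2}}=\|\partial_\nu\psi_\alpha\|_{H^{-1/2}(\Gamma_D)}$) are closed by a frequency-explicit H\"older propagation-of-smallness estimate, with {\rm (a2)} supplying the polynomial-in-$k$ solution-operator bounds (cf.\ Lemma~\ref{le3}) and the Carleman machinery supplying the $e^{Ck}$ constants and the exponent $\theta$, hence $\mu=1/\theta$. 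The only points you leave as ``routine'' that genuinely need writing out are standard but not free: the pairing on $\partial\Omega''$ requires interior elliptic regularity for $v$ (costing a power of $k$, harmlessly absorbed into $e^{Ck}$), and the propagation of smallness must start from vanishing Dirichlet data on all of $\partial\Omega_2$ together with small Neumann data on $\Gamma_D$ only --- i.e.\ a boundary Carleman estimate of exactly the type in Lemma~\ref{c2}/Proposition~\ref{pp}, applied on $\Omega_2\setminus\overline{\Omega_1}$, rather than a purely interior three-ball chain.
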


The following result contains a global estimate for the homogeneous Dirichlet problem depending on dist($k^2,\Sigma_q$), see \cite[Lemma 1.3]{garcia2021runge}.

\begin{lemm}\label{le3}
	Let $\Omega\subset\mathbb{R}^n$ be a bounded Lipschitz domain. Let $q$ satisfies ${\rm (a1
	)}$ in $\Omega_2$. Then there is a discrete set $\Sigma_q\subset\mathbb R$ such that for every $k\geq1$ satisfying ${\rm (a2)}$, there exists a unique solution $u\in H^1(\Omega)$ of
\begin{align*}
	(-\Delta-k^2+q)u&=f \quad\mbox{in} ~~\Omega,\\
	u&=0 \quad\mbox{on}~~\partial\Omega,
\end{align*}
where $f\in L^2(\Omega)$. In addition, there is a constant $C>0$ depending on $\Omega$ and $\|q\|_{L^\infty(\Omega)}$ such that$$\|u\|_{L^2(\Omega)}\leq Ck^n\|f\|_{L^2(\Omega)},$$ $$
  \quad\|u\|_{H^1(\Omega)}\leq Ck^{n+1}\|f\|_{L^2(\Omega)}.$$
\end{lemm}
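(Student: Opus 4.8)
The plan is to treat the boundary value problem through the spectral theory of the Dirichlet realization of $-\Delta+q$ and to read all three assertions off the quantitative spectral gap encoded in (a2). First I would introduce $A:=-\Delta+q$ as the self-adjoint operator on $L^2(\Omega)$ associated with the closed, bounded-below form $a(u,v)=\int_\Omega\nabla u\cdot\overline{\nabla v}\,dx+\int_\Omega q\,u\,\overline v\,dx$ with form domain $H_0^1(\Omega)$; on a bounded Lipschitz domain this is standard. Since the embedding $H_0^1(\Omega)\hookrightarrow L^2(\Omega)$ is compact (Rellich), $A$ has compact resolvent and hence a discrete real spectrum $\lambda_1\le\lambda_2\le\cdots\to+\infty$ with an $L^2$-orthonormal eigenbasis $\{\phi_j\}$. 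Assumption (a1) guarantees $0\notin\sigma(A)$, so the resolvent $T$ is a well-defined compact self-adjoint operator whose eigenvalues are the reciprocals $\lambda_j^{-1}$; the inverses of these eigenvalues form the discrete set $\Sigma_q=\{\lambda_j\}$. Rewriting the equation as $(A-k^2)u=f$, condition (a2) becomes the quantitative non-resonance statement $\operatorname{dist}(k^2,\Sigma_q)=\min_j|\lambda_j-k^2|>ck^{2-n}$, which in particular places $k^2\notin\sigma(A)$.

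Existence and uniqueness are then immediate: once $k^2\notin\sigma(A)$, the resolvent $(A-k^2)^{-1}$ maps $L^2(\Omega)$ boundedly into the form domain $H_0^1(\Omega)$, producing the unique $u\in H^1(\Omega)$ with $u|_{\partial\Omega}=0$. Equivalently, one may invoke the Fredholm alternative, noting that $-\Delta-k^2+q:H_0^1(\Omega)\to H^{-1}(\Omega)$ is a compact perturbation of the invertible operator $-\Delta$, hence Fredholm of index zero, so that triviality of the kernel (furnished by the gap) forces bijectivity.

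For the $L^2$ bound I would expand $f=\sum_j f_j\phi_j$ and write $u=\sum_j\frac{f_j}{\lambda_j-k^2}\phi_j$. By Parseval and (a2),
\[
\|u\|_{L^2(\Omega)}^2=\sum_j\frac{|f_j|^2}{(\lambda_j-k^2)^2}\le\frac{1}{\operatorname{dist}(k^2,\Sigma_q)^2}\,\|f\|_{L^2(\Omega)}^2\le\frac{1}{c^2}\,k^{2(n-2)}\|f\|_{L^2(\Omega)}^2,
\]
so that $\|u\|_{L^2(\Omega)}\le c^{-1}k^{n-2}\|f\|_{L^2(\Omega)}\le Ck^n\|f\|_{L^2(\Omega)}$ because $k\ge1$. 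The $H^1$ bound comes from the energy identity obtained by testing the equation against $u$: taking real parts in $\langle(A-k^2)u,u\rangle=\langle f,u\rangle$ gives $\|\nabla u\|_{L^2(\Omega)}^2=\operatorname{Re}\langle f,u\rangle-\int_\Omega q|u|^2+k^2\|u\|_{L^2(\Omega)}^2\le\|f\|_{L^2(\Omega)}\|u\|_{L^2(\Omega)}+(M+k^2)\|u\|_{L^2(\Omega)}^2$. Substituting the $L^2$ estimate, the dominant contribution is $k^2\cdot k^{2n}\|f\|_{L^2(\Omega)}^2$, whence $\|\nabla u\|_{L^2(\Omega)}\le Ck^{n+1}\|f\|_{L^2(\Omega)}$, and combining with the $L^2$ bound yields $\|u\|_{H^1(\Omega)}\le Ck^{n+1}\|f\|_{L^2(\Omega)}$.

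The one genuinely quantitative point, and the step I expect to be the main obstacle, is the clean extraction of the $k$-powers: every factor of $k$ must be traced back to the spectral separation $\operatorname{dist}(k^2,\Sigma_q)>ck^{2-n}$, and the extra factor of $k$ in the gradient bound is precisely the $k^2\|u\|_{L^2(\Omega)}^2$ term in the energy identity. (The stated exponent $k^n$ in the $L^2$ bound is not sharp — the gap already gives $k^{n-2}$ — but it is the form in which the estimate is used.) For complex-valued $q$ the self-adjoint eigenexpansion is replaced by a resolvent-perturbation argument built on the same Fredholm structure, but in the present real-potential setting the spectral theorem renders every step transparent.
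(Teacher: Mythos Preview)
The paper does not supply its own proof of this lemma: it is quoted verbatim from \cite[Lemma~1.3]{garcia2021runge} and used as a black box. Your spectral-theoretic argument is correct and is precisely the canonical route to such resolvent bounds under a quantitative spectral-gap hypothesis; it is in fact the approach taken in the cited reference. The identification of $\Sigma_q$ with the Dirichlet spectrum of $-\Delta+q$ is the right reading of (a2) for the boundary value problem at hand (the sign in the paper's definition via $T=(\Delta-q)^{-1}$ is slightly awkward, but the intended set is exactly the one you use), and your observation that the gap already yields $k^{n-2}$ rather than $k^n$ in $L^2$ is accurate---the stated exponent is simply what suffices downstream.

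The only point on which you are a bit quick is the complex-potential case: for non-self-adjoint $A$ the bound $\|(A-k^2)^{-1}\|_{L^2\to L^2}\le\operatorname{dist}(k^2,\sigma(A))^{-1}$ can fail because of pseudospectral effects, so the ``resolvent-perturbation argument'' you allude to would need to be made explicit (e.g.\ by treating $q$ as a relatively bounded perturbation of the self-adjoint Dirichlet Laplacian and controlling the Neumann series). Since the paper's main theorem allows $q\in L^\infty(\Omega)$ without a reality assumption, this is not entirely academic; but in the cited source the setting is real, and your proof is complete there.
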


\section{A unique continuation estimate for the Schr\"odinger equation at high frequencies}

In this section, we prove a unique continuation result for the solution of the problem (\ref{mm}) with homogeneous boundary condition which will present a crucial step in the proof of the main results.

Let $\Omega\subset\mathbb R^n$ be a bounded domain with $C^\infty$ boundary, and let $\nu$ be the outward unit normal vector to $\partial\Omega$. Let$$P(h,E)=-h^2\Delta-E,$$where $0<h\leq 1$ and $0\leq E\leq 1$. Letting $\Phi\in C^\infty(\overline\Omega;\mathbb{R})$, we set$$P_{\Phi}(h,E)=e^{\frac{\Phi}{h}}\circ P(h,E)\circ e^{-\frac{\Phi}{h}}.$$
Our starting point is the following  Carleman estimate which is due to \cite{krupchyk2019stability}, see also\cite{Fursikov1996Control},
\cite{le2012carleman}.

\begin{lemm}\label{c1}
	Let  $\Psi\in C^\infty(\overline\Omega;\mathbb R)$ be such that $\Psi\geq0$  on $\overline\Omega$ and $|\nabla\Psi|>0$ on $\overline\Omega$, and set $\Phi=e^{\gamma\Psi}$. Then there exist $\gamma_0> 0$, $h_0> 0$ and $C>0$, such that for all $\gamma\geq\gamma_0$, $0<h\leq h_0$, $0\leq E\leq 1$ and $v\in H^2(\Omega)$ satisfying $v=0$ on $\partial\Omega$, we have
	\begin{align}\label{app}
		\begin{split}
		h\left(\gamma^4\int_\Omega\Phi^3|v|^2\,dx\right.+&\left.\gamma^2\int_\Omega\Phi|h\nabla v|^2\,dx\right)\\
		\leq &C\left(\int_\Omega|P_\Phi(h,E)v|^2\,dx+h\gamma\int_{\partial\Omega}\Phi\partial_\nu\Psi|h\partial_\nu v|^2\,dS\right).
		\end{split}
	\end{align}
\end{lemm}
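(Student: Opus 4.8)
The plan is to establish \eqref{app} by the standard conjugation and symmetrization scheme for Carleman estimates, the exponential weight $\Phi=e^{\gamma\Psi}$ supplying the convexity that makes the governing commutator positive. First I would write out the conjugated operator: using $\nabla\Phi=\gamma\Phi\nabla\Psi$,
$$P_\Phi(h,E)v=-h^2\Delta v+2h\nabla\Phi\cdot\nabla v-\bigl(|\nabla\Phi|^2+E-h\Delta\Phi\bigr)v,$$
and decompose it, with respect to the $L^2(\Omega)$ inner product, into its formally self-adjoint and skew-adjoint parts
$$P_+v=-h^2\Delta v-\bigl(|\nabla\Phi|^2+E\bigr)v,\qquad P_-v=2h\nabla\Phi\cdot\nabla v+h(\Delta\Phi)v.$$
Expanding $\int_\Omega|P_\Phi v|^2\,dx=\|P_+v+P_-v\|_{L^2(\Omega)}^2$ and integrating by parts to move $P_\pm$ onto $v$ (legitimate for $v\in H^2(\Omega)$ with $v=0$ on $\partial\Omega$) yields the fundamental identity
$$\int_\Omega|P_\Phi v|^2\,dx=\|P_+v\|_{L^2(\Omega)}^2+\|P_-v\|_{L^2(\Omega)}^2+\bigl([P_+,P_-]v,v\bigr)_{L^2(\Omega)}+\mathcal B,$$
where $\mathcal B$ collects the boundary contributions produced by Green's formula (from the $-h^2\Delta$ in $P_+$ and from the first-order part of $P_-$).

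Next I would compute $[P_+,P_-]$ and track the powers of $\gamma$; only three brackets are nonzero, and the two leading ones are $[-|\nabla\Phi|^2,2h\nabla\Phi\cdot\nabla]$ and $[-h^2\Delta,2h\nabla\Phi\cdot\nabla]$. The first equals multiplication by $2h\nabla\Phi\cdot\nabla(|\nabla\Phi|^2)$, contributing to $\bigl([P_+,P_-]v,v\bigr)$ a term whose leading part is a positive multiple of $h\gamma^4\int_\Omega\Phi^3|\nabla\Psi|^4|v|^2\,dx$; the second, after one integration by parts (the accompanying boundary integral vanishing since $v=0$ on $\partial\Omega$), contributes a non-negative term whose leading part is a positive multiple of $h\gamma^2\int_\Omega\Phi|\nabla\Psi\cdot h\nabla v|^2\,dx$. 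Using $\Psi\ge0$ (so $\Phi\ge1$), $|\nabla\Psi|\ge c_0>0$ on the compact set $\overline\Omega$, and $0\le E\le1$, all remaining terms carry strictly lower powers of $\gamma$ and are absorbed once $\gamma\ge\gamma_0$ and $0<h\le h_0$. The delicate point is to pass from the \emph{directional} quantity $|\nabla\Psi\cdot h\nabla v|^2$ to the \emph{full} gradient $|h\nabla v|^2$ demanded by \eqref{app}: the commutator is elliptic only along $\nabla\Psi$, so one must retain $\|P_+v\|_{L^2(\Omega)}^2$ on the right --- it is uniformly elliptic in every direction --- and combine the two by a microlocal partition: near the characteristic variety $|\xi|^2=|\nabla\Phi|^2+E$ the transverse components of $h\nabla v$ are of size $|\nabla\Phi|\sim\gamma\Phi$ and are controlled using the $\gamma^4\int_\Omega\Phi^3|v|^2\,dx$ gain, whereas away from it $P_+$ alone dominates $\gamma^2\Phi|h\nabla v|^2$. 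Concretely, one verifies a pointwise inequality between the semiclassical symbols of $P_+$, of $\tfrac1h[P_+,P_-]$, and of $-h^2\Delta$, and concludes via the sharp G\aa rding inequality; this gives
$$h\Bigl(\gamma^4\int_\Omega\Phi^3|v|^2\,dx+\gamma^2\int_\Omega\Phi|h\nabla v|^2\,dx\Bigr)\le C\Bigl(\int_\Omega|P_\Phi v|^2\,dx-\mathcal B\Bigr).$$

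It remains to identify $\mathcal B$. Applying Green's formula to the second-order terms and using $v=0$ on $\partial\Omega$ --- which forces $\nabla v=(\partial_\nu v)\nu$ on $\partial\Omega$, so that every boundary integral quadratic in $v$, or mixing $v$ with its tangential derivative, vanishes --- one is left with a single surviving contribution, a constant multiple of $h\gamma\int_{\partial\Omega}\Phi\,\partial_\nu\Psi\,|h\partial_\nu v|^2\,dS$ (recall $\partial_\nu\Phi=\gamma\Phi\,\partial_\nu\Psi$); hence $-\mathcal B$ is bounded by the boundary term on the right of \eqref{app}, and the two displays combine to give \eqref{app}. I expect the main obstacle to be precisely this gradient recovery: the commutator by itself controls only one component of $h\nabla v$, so the proof must couple it carefully with $\|P_+v\|_{L^2(\Omega)}^2$, while checking that the $E$-dependence stays uniform (routine since $0\le E\le1$) and that no uncontrolled tangential-derivative boundary remainder survives.
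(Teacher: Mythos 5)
Your setup coincides with the paper's: conjugate, split $P_\Phi(h,E)$ into its formally self-adjoint part $A_2=(hD)^2-|\Phi'|^2-E$ and skew part ${\rm i}A_1$, expand the square, and track the boundary terms produced by Green's formula (your identification of the surviving boundary contribution as a multiple of $h\gamma\int_{\partial\Omega}\Phi\,\partial_\nu\Psi\,|h\partial_\nu v|^2\,dS$, using $\nabla v=(\partial_\nu v)\nu$ on $\partial\Omega$, matches the paper's computation of the term $bt$). You also correctly diagnose the crux: the pure commutator $([A_2,{\rm i}A_1]v,v)$ only yields the Hessian term $2h^3\int_\Omega\Phi''\nabla v\cdot\nabla\bar v\,dx$, whose leading part $\gamma^2\Phi\,|\nabla\Psi\cdot h\nabla v|^2$ is elliptic only in the $\nabla\Psi$ direction.

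Where you diverge is in how the full gradient is recovered, and this is where your argument has a genuine gap. You propose a microlocal partition of unity relative to the characteristic variety of $A_2$ plus the sharp G\aa rding inequality. On a bounded domain with Dirichlet data this is not a routine step: pseudodifferential cutoffs do not respect $\partial\Omega$, and the whole point of the lemma is to retain the \emph{exact signed} boundary term $h\gamma\int_{\partial\Omega}\Phi\,\partial_\nu\Psi|h\partial_\nu v|^2\,dS$ (its sign on $\partial\Omega\setminus\Gamma_N$ is what makes Lemma \ref{c2} work), which a G\aa rding-type argument would contaminate with uncontrolled boundary remainders. You flag this yourself as ``the main obstacle,'' but you do not resolve it. The paper avoids the issue entirely by the Fursikov--Imanuvilov device: instead of the symmetric/skew splitting one writes $(A_2+{\rm i}\underline{A_1})v=g+h\theta\Delta\Phi\,v$ with $\underline{A_1}=\frac{1}{{\rm i}}[2\Phi'\cdot h\nabla+h(\theta+1)\Delta\Phi]$ and $0<\theta<2$. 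The shift by $\theta$ makes the cross term ${\rm Re}(A_2v,{\rm i}\underline{A_1}v)$ produce, by elementary integration by parts, the coefficient $\alpha_1=\theta\Delta\Phi\geq c\gamma^2\Phi$ in front of the \emph{full} $|\nabla v|^2$ (the $I_{12}$ contribution $(\theta+1)\Delta\Phi|\nabla v|^2$ minus the $I_{11}$ contribution $\Delta\Phi|\nabla v|^2$), while keeping $\widetilde\alpha_0\geq C\gamma^4\Phi^3$; the extra term $h^2\theta^2\|\Delta\Phi\,v\|^2$ is absorbed for small $h$. No microlocal analysis is needed. To repair your write-up you should either adopt this $\theta$-modification, or replace the G\aa rding step by the equally elementary alternative of pairing $A_2v$ against $\Phi v$ and integrating by parts to trade $\int\Phi|h\nabla v|^2$ for $\|A_2v\|^2$ and $\gamma^2\int\Phi^3|v|^2$, both of which you already control.
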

The proof of Lemma \ref{c1} is similar to \cite{krupchyk2019stability} except for  some boundary terms.
For the convenience of the readers, we give the proof in Appendix.

Next we will give a Carleman estimate for the operator $P(h,E)=-h^2\Delta-E$ by choosing a specific  $\Phi$ in Lemma \ref{c1}.

\begin{lemm}\label{c2}
	Let $\emptyset\neq \Gamma_N\subset\partial\Omega$ be open and let $\Psi\in C^\infty(\overline\Omega;\mathbb R)$ be such that $\Psi\geq0$  on $\overline\Omega$, $|\nabla\Psi|>0$ on $\overline\Omega$ and $\partial_\nu\Psi|_{\partial\Omega\backslash\Gamma_N}\leq0$. Let $\Phi=e^{\beta_0\Psi}$, $\beta_0\gg1.$ Then there exists $0<h_0\leq 1$ and $C>0$ such that for all  $0\leq E\leq 1$, $k\geq1$, $0<h\leq \frac{h_0}{k}$ and all $u\in H^2(\Omega)$ satisfying $u=0$ on $\partial\Omega$, we have
	\begin{align}\label{e7}
		h\int_\Omega e^{\frac{2\Phi}{h}}\left(|u|^2+|h\nabla u|^2\right)\,dx\leq C\left(\int_\Omega e^{\frac{2\Phi}{h}}|P(h,E)u|^2\,dx+h^3\int_{\Gamma_N}e^{\frac{2\Phi}{h}}|\partial_\nu u|^2\,dS\right).
	\end{align}
\end{lemm}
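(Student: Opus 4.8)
The plan is to derive Lemma~\ref{c2} from the general weighted estimate of Lemma~\ref{c1} by conjugating with the exponential weight $e^{\Phi/h}$ and then absorbing the lower-order terms that the conjugation generates. Fix $\gamma=\beta_0$, chosen at least as large as the threshold $\gamma_0$ of Lemma~\ref{c1}, so that (\ref{app}) holds with $\Phi=e^{\beta_0\Psi}$; note that the hypothesis $0<h\le h_0/k$ with $k\ge1$ already forces $0<h\le h_0$, so Lemma~\ref{c1} is applicable. Given $u\in H^2(\Omega)$ with $u|_{\partial\Omega}=0$, set $v:=e^{\Phi/h}u$. Since $\Phi\in C^\infty(\overline\Omega)$, $v\in H^2(\Omega)$ and $v|_{\partial\Omega}=0$, and I would apply (\ref{app}) to $v$.

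The next step is to rewrite every term of (\ref{app}) in terms of $u$. By definition of $P_\Phi$, $P_\Phi(h,E)v=e^{\Phi/h}P(h,E)u$, so the bulk term on the right of (\ref{app}) becomes $\int_\Omega e^{2\Phi/h}|P(h,E)u|^2\,dx$. For the gradient I would use $h\nabla u=e^{-\Phi/h}(h\nabla v-v\nabla\Phi)$, whence $e^{2\Phi/h}|h\nabla u|^2\le 2|h\nabla v|^2+2|\nabla\Phi|^2|v|^2$. For the boundary term, on $\partial\Omega$ the identity $\partial_\nu v=e^{\Phi/h}(h^{-1}(\partial_\nu\Phi)u+\partial_\nu u)$ together with $u|_{\partial\Omega}=0$ gives $|h\partial_\nu v|^2=h^2e^{2\Phi/h}|\partial_\nu u|^2$ on $\partial\Omega$.

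Now I would bound the left side of (\ref{app}) from below. Because $\Psi\ge0$ we have $\Phi\ge1$, hence $\Phi^3\ge\Phi\ge1$, and $|\nabla\Phi|^2=\gamma^2\Phi^2|\nabla\Psi|^2\le\|\nabla\Psi\|_{L^\infty(\Omega)}^2\,\gamma^2\Phi^3$. Therefore $h\int_\Omega e^{2\Phi/h}|u|^2\,dx=h\int_\Omega|v|^2\,dx\le\gamma^{-4}\,h\gamma^4\int_\Omega\Phi^3|v|^2\,dx$, while $2h\int_\Omega|h\nabla v|^2\,dx\le2\gamma^{-2}\,h\gamma^2\int_\Omega\Phi|h\nabla v|^2\,dx$ and $2h\int_\Omega|\nabla\Phi|^2|v|^2\,dx\le2\|\nabla\Psi\|_{L^\infty(\Omega)}^2\,\gamma^{-2}\,h\gamma^4\int_\Omega\Phi^3|v|^2\,dx$; summing, $h\int_\Omega e^{2\Phi/h}(|u|^2+|h\nabla u|^2)\,dx$ is at most a constant depending only on $\gamma_0$ and $\Psi$ times the left-hand side of (\ref{app}). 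For the boundary term on the right of (\ref{app}) I would write $h\gamma\int_{\partial\Omega}\Phi\,\partial_\nu\Psi\,|h\partial_\nu v|^2\,dS=\gamma h^3\int_{\partial\Omega}\Phi\,\partial_\nu\Psi\,e^{2\Phi/h}|\partial_\nu u|^2\,dS$ and split the integral over $\Gamma_N$ and $\partial\Omega\setminus\Gamma_N$; the assumption $\partial_\nu\Psi|_{\partial\Omega\setminus\Gamma_N}\le0$ makes the second contribution $\le0$ and it can be discarded, while on $\Gamma_N$ one estimates $\Phi\,\partial_\nu\Psi\le\|\Phi\,\partial_\nu\Psi\|_{L^\infty(\partial\Omega)}$, giving $\lesssim h^3\int_{\Gamma_N}e^{2\Phi/h}|\partial_\nu u|^2\,dS$. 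Chaining these bounds through (\ref{app}) yields (\ref{e7}) with $C$ depending only on $\Omega$, $\Psi$, $\beta_0$, hence independent of $E\in[0,1]$, $k\ge1$ and $h$.

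I expect the only genuinely delicate point to be the gradient conjugation: since $h\nabla(e^{\Phi/h}u)\ne e^{\Phi/h}h\nabla u$, one is forced to carry along the extra term $|\nabla\Phi|^2|v|^2$, and the estimate only closes because this term, of size $\gamma^2\Phi^2|v|^2$, is absorbed into the $\gamma^4\int_\Omega\Phi^3|v|^2\,dx$ term of (\ref{app}), which wins by a factor $\gamma^{-2}$; this is precisely where the largeness of $\beta_0$ is used. The remaining manipulations (tracking the powers of $h$ in the boundary term and using $\Phi\ge1$ to replace $\Phi^3$, $\Phi$ by $1$) are routine bookkeeping.
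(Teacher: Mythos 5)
Your proposal is correct and follows essentially the same route as the paper: substitute $v=e^{\Phi/h}u$, apply Lemma~\ref{c1} with $\gamma=\beta_0$ fixed, recover $|h\nabla u|^2$ from $|h\nabla v|^2$ by absorbing the commutator term $|\nabla\Phi|^2|v|^2=\beta_0^2\Phi^2|\nabla\Psi|^2|v|^2$ into the $\Phi^3|v|^2$ term, and discard the boundary contribution on $\partial\Omega\setminus\Gamma_N$ using $u|_{\partial\Omega}=0$ together with $\partial_\nu\Psi\le 0$ there. The bookkeeping with $\Phi\ge 1$ and the powers of $h$ in the boundary term matches the paper's computation.
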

\begin{proof}
	Putting $v=e^{\frac{\Phi}{h}}u$, we have $P_\Phi(h,E)v=e^{\frac{\Phi}{h}}P(h,E)u$ and $v|_{\partial\Omega}=0$. Then we can apply Lemma \ref{c1} and obtain
	\begin{align}\label{e1}
		h\left(\int_\Omega\Phi^3e^{\frac{2\Phi}{h}}|u|^2\,dx\right.+&\left.\int_\Omega\Phi e^{\frac{2\Phi}{h}}|h\nabla u+\beta_0\Phi  u\nabla\Psi|^2\,dx\right)\nonumber\\
		\leq &C\left(\int_\Omega e^{\frac{2\Phi}{h}}|P(h,E)u|^2\,dx+h\int_{\partial\Omega}e^{\frac{2\Phi}{h}}\Phi\partial_\nu\Psi|h\partial_\nu u|^2\,dS\right)\nonumber\\
		\leq &C\left(\int_\Omega e^{\frac{2\Phi}{h}}|P(h,E)u|^2\,dx+h^3\int_{\Gamma_N}e^{\frac{2\Phi}{h}}|\partial_\nu u|^2\,dS\right).
	\end{align}
	Here we have used $u|_{\partial\Omega}=0$ and $\partial_\nu\Psi|_{\partial\Omega\backslash\Gamma_N}\leq0$.
	
	By (\ref{e1}), we see that
	\begin{align*}
		h\int_\Omega e^{\frac{2\Phi}{h}}|h\nabla u|^2\,dx\leq &Ch\left(\int_\Omega e^{\frac{2\Phi}{h}}|h\nabla u+\beta_0\Phi  u\nabla\Psi|^2\,dx+\int_\Omega e^{\frac{2\Phi}{h}}|\beta_0\Phi  u\nabla\Psi|^2\,dx\right)\\
		\leq &Ch\left(\int_\Omega e^{\frac{2\Phi}{h}}|h\nabla u+\beta_0\Phi  u\nabla\Psi|^2\,dx+\int_\Omega e^{\frac{2\Phi}{h}}\Phi ^3|u|^2\,dx\right)\\
		\leq &C\left(\int_\Omega e^{\frac{2\Phi}{h}}|P(h,E)u|^2\,dx+h^3\int_{\Gamma_N}e^{\frac{2\Phi}{h}}|\partial_\nu u|^2\,dS\right).
	\end{align*}
	Thus, we can obtain
	$$h\int_\Omega e^{\frac{2\Phi}{h}}\left(|u|^2+|h\nabla u|^2\right)\,dx\leq C\left(\int_\Omega e^{\frac{2\Phi}{h}}|P(h,E)u|^2\,dx+h^3\int_{\Gamma_N}e^{\frac{2\Phi}{h}}|\partial_\nu u|^2\,dS\right),$$which completes the proof.
\end{proof}

To apply Lemma \ref{c2} in this paper, we need the following result on existence of a weight function $\Psi$ with special properties (see e.g., \cite{Fursikov1996Control}).

\begin{lemm}\label{c3}
	Let $\emptyset\neq\Gamma_N\subset\partial\Omega$ be an arbitrary open subset. Then there exists $\Psi\in C^\infty(\overline\Omega)$ such that
	\begin{align*}
		&\Psi(x)>0, \quad x\in \Omega, \quad\quad\quad \quad|\nabla\Psi(x)|>0,\quad x\in \overline\Omega,\\
		&\Psi(x)=0, \quad x\in\partial\Omega \backslash\Gamma_N, \quad\quad\partial_\nu\Psi\leq0, \quad x\in\partial\Omega\backslash\Gamma_N.
	\end{align*}
\end{lemm}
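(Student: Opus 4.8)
The plan is to deduce Lemma~\ref{c3} from the classical Fursikov--Imanuvilov construction of Carleman weight functions (see \cite{Fursikov1996Control}), applied not on $\Omega$ but on a slightly larger domain obtained by attaching a smooth ``cap'' to $\Omega$ along a small piece of $\Gamma_N$, and with observation region a ball sitting inside that cap. Since $\Gamma_N$ is a nonempty open subset of the $C^\infty$ hypersurface $\partial\Omega$, first I fix an open set $\gamma'$ with $\overline{\gamma'}\subset\Gamma_N$ (a small geodesic ball, say), and I construct a bounded domain $\Omega_1\subset\mathbb R^n$ with $C^\infty$ boundary such that $\overline\Omega\subset\Omega_1$, the cap $\Omega_1\setminus\overline\Omega$ is a thin smooth neighbourhood of $\gamma'$ on the exterior side of $\partial\Omega$ whose closure meets $\partial\Omega$ only inside $\gamma'$, and $\partial\Omega_1$ coincides with $\partial\Omega$ on $\partial\Omega\setminus\gamma'$. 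This is a routine local construction (boundary normal coordinates near $\gamma'$, a smooth bump, and a smoothing). Then I pick an open ball $B$ with $\overline B\subset\Omega_1\setminus\overline\Omega$.

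Next I invoke the standard weight lemma on $\Omega_1$ with observation set $B$: there is $\psi\in C^\infty(\overline{\Omega_1})$ with $\psi>0$ in $\Omega_1$, $\psi=0$ on $\partial\Omega_1$, and $|\nabla\psi|>0$ on $\overline{\Omega_1}\setminus B$. ($C^\infty$ regularity up to the boundary is available since $\partial\Omega_1$ is $C^\infty$; the construction starts from a function with no critical point in $\overline{\Omega_1}$ and is modified near $\partial\Omega_1$ so as to vanish there while keeping the gradient away from zero outside $B$.) Put $\Psi:=\psi|_{\overline\Omega}\in C^\infty(\overline\Omega)$.

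Now I verify the four properties. Since $\Omega\subset\Omega_1$, we get $\Psi>0$ in $\Omega$. Since $\overline B\subset\Omega_1\setminus\overline\Omega$ we have $\overline\Omega\subset\overline{\Omega_1}\setminus B$, so $|\nabla\Psi|>0$ on $\overline\Omega$. Because $\overline{\gamma'}\subset\Gamma_N$, we have $\partial\Omega\setminus\Gamma_N\subset\partial\Omega\setminus\gamma'\subset\partial\Omega_1$, and $\psi=0$ on $\partial\Omega_1$, hence $\Psi=0$ on $\partial\Omega\setminus\Gamma_N$. Finally, near any point of $\partial\Omega\setminus\Gamma_N$ (which is disjoint from $\overline{\gamma'}$) the domains $\Omega$ and $\Omega_1$ coincide, so the outward unit normals agree there; moreover $\psi\equiv0$ on $\partial\Omega_1$ forces its tangential gradient to vanish, so $\nabla\psi=(\partial_\nu\psi)\,\nu$ on that part of the boundary, and since $\psi>0$ just inside while $|\nabla\psi|\neq0$ there, we conclude $\partial_\nu\Psi=\partial_\nu\psi<0$ on $\partial\Omega\setminus\Gamma_N$, in particular $\partial_\nu\Psi\le0$ there. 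This proves all the claims.

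The only genuinely technical ingredients are (i) the construction of the capped $C^\infty$ domain $\Omega_1$ with $\partial\Omega_1$ agreeing with $\partial\Omega$ precisely off $\gamma'$, and (ii) the $C^\infty$ version of the Fursikov--Imanuvilov weight lemma on $\Omega_1$; both are classical. Note that $\Psi$ will be positive on a part of $\Gamma_N$, which is harmless since Lemma~\ref{c3} imposes no constraint on $\Gamma_N$, and that the sign condition $\partial_\nu\Psi\le0$ is demanded only on $\partial\Omega\setminus\Gamma_N$, exactly where the vanishing of $\psi$ on $\partial\Omega_1$ supplies it automatically.
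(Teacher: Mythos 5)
Your construction is correct and is precisely the classical Fursikov--Imanuvilov argument that the paper invokes here only by citation: extend $\Omega$ across a small patch $\gamma'\Subset\Gamma_N$ to a smooth domain $\Omega_1$, place the exceptional ball of the standard weight lemma inside the cap $\Omega_1\setminus\overline\Omega$, and restrict the resulting $\psi$ to $\overline\Omega$. Your verification of the four properties, including deducing $\partial_\nu\Psi<0$ on $\partial\Omega\setminus\Gamma_N$ from the vanishing of $\psi$ on $\partial\Omega_1$ together with $\psi>0$ inside and $|\nabla\psi|\neq0$ there, is sound.
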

Before presenting the unique continuation result, we introduce some  notations. Let $\mathcal O\subset\Omega$ be an arbitrary neighborhood of $\partial\Omega$.
We consider three open subsets $\mathcal O_j (j=1,2,3)$ of $\mathcal O$, which are also neighborhood of $\partial\Omega$ with $C^{\infty}$ boundary,  such that
$$\partial\Omega\subset \partial\mathcal O_j,\quad  \overline{\mathcal O}_{j+1}\subset \mathcal O_j, \quad \overline{\mathcal O}_j\subset \mathcal O, \quad  j=1,2,3.$$
 In addition, we  set $\Gamma_\sharp=\partial\mathcal O\backslash\partial\Omega $.
 Consequently,
 $$\partial\mathcal O=\partial\Omega \cup \Gamma_\sharp, \quad \partial\Omega \cap \Gamma_\sharp=\emptyset.$$
And we may assume, without of generality, that $\Gamma_\sharp$ is $C^{\infty}$-smooth.

Let $q\in L^\infty(\Omega)$, $k\geq1$, and let $u\in H^2(\Omega)$ be such that
\begin{align}\label{e2}
	\begin{split}
		(-\Delta-k^2+q)u&=0 \quad\mbox{in} ~~\mathcal O,\\
	u&=0 \quad\mbox{on}~~\partial\Omega.
	\end{split}
\end{align}

We are able to prove the following local unique continuation estimate which is an analog of \cite[Proposition 3.1]{krupchyk2019stability}, see also  \cite{bellassoued2009logarithmic}, \cite{joud2009stability}.
\begin{prop}\label{pp}
	There are constants $0<h_0\leq 1$, $C>0$, $\alpha_1>0$ and  $\alpha_2>0$ such that for all $k\geq1$, $0<h\leq\frac{h_0}{k}$ and all $u\in H^2(\Omega)$ satisfying (\ref{e2}), we have
	\begin{align}\label{ee}
		\|u\|_{H^1(\mathcal O_2\backslash\mathcal O_3)}\leq C\left(e^{-\frac{\alpha_1}{h}}\|u\|_{H^1(\Omega)}+e^{\frac{\alpha_2}{h}}\|u\|_{H^2(\Omega)}^{\frac{1}{2}}\|\partial_\nu u\|_{H^{-\frac{1}{2}}(\Gamma_N)}^{\frac{1}{2}}\right).
	\end{align}
\end{prop}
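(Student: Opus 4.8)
The plan is to deploy the Carleman estimate of Lemma~\ref{c2} on the collar domain $\mathcal O$, but applied to a suitable cutoff of $u$ so as to convert the interior information on $\mathcal O_2\backslash\mathcal O_3$ into a weighted bound involving only the right-hand side data. First I would fix the weight $\Psi\in C^\infty(\overline{\mathcal O})$ provided by Lemma~\ref{c3} relative to the collar $\mathcal O$ and the observation set $\Gamma_N\subset\partial\Omega$, so that $\Psi>0$ inside, $|\nabla\Psi|>0$ on $\overline{\mathcal O}$, $\Psi=0$ on $\partial\Omega\backslash\Gamma_N$, and $\partial_\nu\Psi\le 0$ there; set $\Phi=e^{\beta_0\Psi}$ with $\beta_0\gg1$. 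Note that on $\Gamma_\sharp=\partial\mathcal O\backslash\partial\Omega$ we have no control of the Cauchy data of $u$, so I would introduce a cutoff $\chi\in C_c^\infty(\mathcal O)$ with $\chi\equiv1$ on a neighborhood of $\overline{\mathcal O}_3$ and $\mathrm{supp}\,\chi\subset\mathcal O_1$, so that $\chi u$ vanishes near $\Gamma_\sharp$ and equals $0$ on $\partial\Omega$ (hence satisfies the boundary hypothesis of Lemma~\ref{c2} on $\mathcal O$). Then $P(h,E)(\chi u)=\chi P(h,E)u+[P(h,E),\chi]u$; choosing $E=h^2k^2$ (which lies in $[0,1]$ when $h\le h_0/k$) turns $-h^2\Delta-E$ applied to $u$ into $-h^2(\Delta+k^2)u=-h^2 qu$ on $\mathcal O$ by \eqref{e2}, so the first term is $O(h^2)\|u\|_{L^2}$ in weighted norm, while the commutator term is a first-order operator supported in $\mathcal O_1\backslash\mathcal O_2$ (where $\nabla\chi$ lives), producing a contribution bounded by $Ch\,e^{\frac{2}{h}\sup_{\mathcal O_1\backslash\mathcal O_2}\Phi}\|u\|_{H^1(\Omega)}^2$.

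Next I would insert all of this into \eqref{e7}. The left-hand side dominates $h\,e^{\frac{2}{h}\inf_{\mathcal O_2\backslash\mathcal O_3}\Phi}\|u\|_{H^1(\mathcal O_2\backslash\mathcal O_3)}^2$ after discarding the positive parts outside and restricting to the region where $\chi\equiv1$. On the right, the boundary term on $\Gamma_N$ is $h^3\int_{\Gamma_N}e^{\frac{2\Phi}{h}}|\partial_\nu u|^2\,dS\le Ch^3 e^{\frac{2}{h}\sup_{\overline\Omega}\Phi}\|\partial_\nu u\|_{L^2(\Gamma_N)}^2$, which I would then interpolate against $\|\partial_\nu u\|_{H^{-1/2}(\Gamma_N)}$ and $\|\partial_\nu u\|_{H^{1/2}(\Gamma_N)}$, the latter controlled by $\|u\|_{H^2(\Omega)}$ via the trace theorem, yielding the mixed factor $\|u\|_{H^2(\Omega)}^{1/2}\|\partial_\nu u\|_{H^{-1/2}(\Gamma_N)}^{1/2}$. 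Collecting terms, dividing by $h\,e^{\frac{2}{h}\inf_{\mathcal O_2\backslash\mathcal O_3}\Phi}$ and taking square roots produces an estimate of the shape
\begin{align*}
\|u\|_{H^1(\mathcal O_2\backslash\mathcal O_3)}\le C e^{\frac{1}{h}\left(\sup_{\mathcal O_1\backslash\mathcal O_2}\Phi-\inf_{\mathcal O_2\backslash\mathcal O_3}\Phi\right)}\|u\|_{H^1(\Omega)}+C e^{\frac{1}{h}\left(\sup_{\overline\Omega}\Phi-\inf_{\mathcal O_2\backslash\mathcal O_3}\Phi\right)}\|u\|_{H^2(\Omega)}^{1/2}\|\partial_\nu u\|_{H^{-1/2}(\Gamma_N)}^{1/2},
\end{align*}
where the $h$-powers have been absorbed into the exponentials at the cost of enlarging $C$.

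The main obstacle—and the reason the nested collars $\mathcal O_3\subset\mathcal O_2\subset\mathcal O_1\subset\mathcal O$ are set up as they are—is arranging the weight so that the first exponent is \emph{negative}. This requires $\sup_{\mathcal O_1\backslash\mathcal O_2}\Phi<\inf_{\mathcal O_2\backslash\mathcal O_3}\Phi$, i.e. the level sets of $\Psi$ must be ordered so that $\Psi$ is larger on the annulus $\mathcal O_2\backslash\mathcal O_3$ (closer to $\partial\Omega$) than on $\mathcal O_1\backslash\mathcal O_2$; since $\Psi$ vanishes on $\partial\Omega\backslash\Gamma_N$ this is a genuine constraint, handled by shrinking the collars appropriately (and, if necessary, composing $\Psi$ with a convex increasing function, as in the construction behind Lemma~\ref{c3}) and then choosing $\beta_0$ large to sharpen the separation. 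Setting $\alpha_1:=\inf_{\mathcal O_2\backslash\mathcal O_3}\Phi-\sup_{\mathcal O_1\backslash\mathcal O_2}\Phi>0$ and $\alpha_2:=\sup_{\overline\Omega}\Phi-\inf_{\mathcal O_2\backslash\mathcal O_3}\Phi>0$ gives exactly \eqref{ee}. A minor technical point to check is that the constant $h_0$ from Lemma~\ref{c2} is uniform in $k$ because the constraint there is precisely $h\le h_0/k$ with $h_0$ independent of $k$, so all estimates hold for $0<h\le h_0/k$ as claimed.
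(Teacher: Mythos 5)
Your overall architecture (semiclassical rescaling with $E=(hk)^2\in[0,1]$, a cutoff to remove the uncontrolled Cauchy data on $\Gamma_\sharp=\partial\mathcal O\backslash\partial\Omega$, Lemma \ref{c2} on the collar, and duality/trace interpolation of the $\Gamma_N$ boundary term) matches the paper's. But there is a genuine gap at exactly the point you flag as ``the main obstacle'': with the weight you choose, the inequality $\sup_{\mathcal O_1\backslash\mathcal O_2}\Phi<\inf_{\mathcal O_2\backslash\mathcal O_3}\Phi$ is false, and none of your proposed remedies can repair it. Indeed, if $\Psi=0$ on $\partial\Omega\backslash\Gamma_N$, $\Psi>0$ in $\mathcal O$ and $|\nabla\Psi|>0$ on $\overline{\mathcal O}$, then $\partial_\nu\Psi<0$ strictly on $\partial\Omega\backslash\Gamma_N$, so $\Psi$ is strictly increasing along inward normal rays issued from $\partial\Omega\backslash\Gamma_N$. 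Such a ray meets the shallower shell $\mathcal O_2\backslash\mathcal O_3$ before the deeper shell $\mathcal O_1\backslash\mathcal O_2$, so $\Psi$ is strictly larger at the second crossing than at the first; hence $\sup_{\mathcal O_1\backslash\mathcal O_2}\Psi>\inf_{\mathcal O_2\backslash\mathcal O_3}\Psi$ and your $\alpha_1$ is negative, destroying the first term of (\ref{ee}). Composing $\Psi$ with a convex increasing function, or passing to $\Phi=e^{\beta_0\Psi}$ with $\beta_0$ large, is a monotone reparametrization and cannot reorder values of $\Psi$; shrinking the collars does not help either, since the monotonicity of $\Psi$ along inward normals from $\partial\Omega\backslash\Gamma_N$ persists at every scale.

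The paper resolves this by orienting the weight the other way: the function $\psi$ of (\ref{e3})--(\ref{e4}) vanishes only on the \emph{inner} boundary $\Gamma_\sharp$, is positive in $\mathcal O$ (hence $\psi\geq 2m_0$ on the compact set $\overline{\mathcal O_2\backslash\mathcal O_3}\subset\mathcal O$, which is separated from $\partial\Omega$ by $\mathcal O_3$), and on $\partial\Omega\backslash\Gamma_N$ only the sign condition $\partial_\nu\psi\leq0$ is imposed (that is all that is needed to discard the boundary term there in Lemma \ref{c2}); the condition $\psi=0$ on $\partial\Omega\backslash\Gamma_N$ is deliberately \emph{not} required. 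Correspondingly, the transition region of the cutoff is placed in a thin neighborhood $W_\sharp$ of $\Gamma_\sharp$, disjoint from $\overline{\mathcal O}_1$, where $\psi\leq m_0$ by continuity --- not in $\mathcal O_1\backslash\mathcal O_2$. This is what produces $\alpha_1=e^{2m_0\alpha}-e^{m_0\alpha}>0$. So your argument needs two changes: use a weight that is small near $\Gamma_\sharp$ and bounded below on a neighborhood of $\overline{\mathcal O_2\backslash\mathcal O_3}$ (rather than one vanishing on $\partial\Omega\backslash\Gamma_N$), and move the support of $\nabla\chi$ into a thin collar around $\Gamma_\sharp$. The remaining steps of your proposal (lower bound on the weighted left-hand side, absorption of $h^2qu$, and the interpolation $\|\partial_\nu u\|_{L^2(\Gamma_N)}^2\leq C\|u\|_{H^2(\Omega)}\|\partial_\nu u\|_{H^{-\frac12}(\Gamma_N)}$) are the same as the paper's and are fine.
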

\begin{proof}
	Letting $h>0$, we rewrite (\ref{e2})  as follows,
	\begin{align}\label{ee1}
		\begin{split}
			(-h^2\Delta-(hk)^2+h^2q)u&=0 \quad\mbox{in} ~~\mathcal O,\\
	u&=0 \quad\mbox{on}~~\partial\Omega.
		\end{split}
	\end{align}
	Thanks to Lemma \ref{c3} there exists $\psi\in C^\infty(\overline{\mathcal O})$ such that
	 \begin{align}
		&\psi(x)>0, \quad x\in \mathcal O, \quad\quad|\nabla\psi(x)|>0,\quad x\in \overline{\mathcal O},\label{e3}\\
		&\psi(x)=0, \quad x\in\Gamma_\sharp, \quad\quad\partial_\nu\psi\leq0, \quad x\in\partial\mathcal O\backslash\Gamma_N.\label{e4}
	\end{align}
Here we have noted that $\Gamma_\sharp=\partial\mathcal O\backslash\partial\Omega\subset\partial\mathcal O\backslash\Gamma_N$.
	Let
	\begin{align}\label{e8}
		\varphi=e^{\alpha\psi},
	\end{align}
	with $\alpha>0$ sufficiently large as in Lemma \ref{c2}.

Now we will borrow some ideas from \cite{krupchyk2019stability} and \cite{bellassoued2009logarithmic}, \cite{bellassoued2021stably},\cite{joud2009stability}. Recalling the first part of (\ref{e3}) and taking into account that $\mathcal O_1\subset \mathcal O$, we deduce that there exists a constant $m_0>0$ such that
\begin{align}\label{e5}
	\psi(x)\geq2m_0, \quad x \in \mathcal O_2\backslash\mathcal O_3.
\end{align}
Moreover, since $\psi(x)=0$ on $\Gamma_\sharp$, there exists a small neighborhood $W_\sharp$ of $\Gamma_\sharp$ such that
\begin{align}\label{e6}
	\psi(x)\leq m_0, \quad x \in W_\sharp, ~~W_\sharp\cap\overline{\mathcal O}_1=\emptyset.
\end{align}
Let $ W_\sharp^*\subset\ W_\sharp$ be an arbitrary fixed neighborhood of the subboundary part $\Gamma_\sharp$. In order to apply (\ref{e7}), it is necessary to introduce a cut-off function $\chi_1$ satisfying $0\leq \chi_1\leq1$, $\chi_1\in C^\infty(\mathcal O)$ and
\begin{align*}
	\chi_1(x)=
	\begin{cases}
		1,\quad x\in\mathcal O\backslash W_\sharp,\\
		0,\quad x\in W_\sharp^*.
	\end{cases}
\end{align*}
Setting $\widetilde u=\chi_1 u$ and noting that $u$ satisfies (\ref{ee1}), we obtain the following system
\begin{align}\label{e9}
		\begin{split}
			(-h^2\Delta-(hk)^2+h^2q)\widetilde u&=[-h^2\Delta,\chi_1 ]u
		\quad\mbox{in} ~~\mathcal O,\\
	\widetilde u&=0 \quad\mbox{on}~~\partial\mathcal O.
		\end{split}
	\end{align}
	Applying the Carleman estimate (\ref{e7}) for the operator $P(h,(hk)^2)=-h^2\Delta-(hk)^2$ on the domain $\mathcal O$, with the Carleman weight $\varphi$ given by (\ref{e8}), and $\widetilde u\in H^2(\mathcal O)$ satisfying (\ref{e9}), we obtain that there exist $0<h_0\leq1$ and $C>0$, such that for all $k\geq1$, $0\leq h\leq\frac{h_0}{k}$,
	\begin{align}\label{e010}
		h\int_\mathcal O e^{\frac{2\varphi}{h}}\left(|\widetilde u|^2+|h\nabla\widetilde u|^2\right)\,dx&\leq C\left(\int_\mathcal O e^{\frac{2\varphi}{h}}|P(h,(hk)^2)\widetilde u|^2\,dx+h^3\int_{\Gamma_N}e^{\frac{2\varphi}{h}}|\partial_\nu \widetilde u|^2\,dS\right)\nonumber\\
		&\leq C\left(\int_\mathcal O e^{\frac{2\varphi}{h}}\left|[-h^2\Delta,\chi_1 ]u-h^2q\widetilde u\right|^2\,dx+h^3\int_{\Gamma_N}e^{\frac{2\varphi}{h}}|\partial_\nu \widetilde u|^2\,dS\right).
	\end{align}
	Then, for $h$ sufficiently small, we directly get from (\ref{e010}) that
	\begin{align}\label{e10}
		h\int_\mathcal O e^{\frac{2\varphi}{h}}\left(|\widetilde u|^2+|h\nabla\widetilde u|^2\right)\,dx\leq C\left(\int_\mathcal O e^{\frac{2\varphi}{h}}\left|[-h^2\Delta,\chi_1 ]u\right|^2\,dx+h^3\int_{\Gamma_N}e^{\frac{2\varphi}{h}}|\partial_\nu \widetilde u|^2\,dS\right).
	\end{align}
	
As	$\chi_1(x)=1$ in $\mathcal O\backslash W_\sharp\supset\mathcal O_2\backslash\mathcal O_3$ and in view of (\ref{e5}), we get
	\begin{align}\label{e11}
		h\int_\mathcal O e^{\frac{2\varphi}{h}}\left(|\widetilde u|^2+|h\nabla\widetilde u|^2\right)\,dx&\geq he^{\frac{2}{h}e^{2m_0\alpha}}\int_{\mathcal O_2\backslash \mathcal O_3}\left(|u|^2+|h\nabla u|^2\right)\,dx\nonumber\\
		&:=he^{\frac{2}{h}e^{2m_0\alpha}}\|u\|^2_{H^1_{scl}(\mathcal O_2\backslash\mathcal O_3)},
	\end{align}	
	where $\|u\|^2_{H^1_{scl}(\Omega)}=\|u\|^2_{L^2(\Omega)}+\|h\nabla u\|^2_{L^2(\Omega)}$.
	
By the definition of $\chi_1$, we can see that $${\rm supp}([-h^2\Delta,\chi_1 ])\subset W_\sharp\backslash\ W_\sharp^*.$$
Consequently, noting also (\ref{e6}), we have
	\begin{align}\label{e12}
		\int_\mathcal O e^{\frac{2\varphi}{h}}\left|[-h^2\Delta,\chi_1 ]u\right|^2\,dx&\leq Ch^4e^{\frac{2}{h}e^{m_0\alpha}}\int_{W_\sharp\backslash\ W_\sharp^*}\left(|u|^2+|\nabla u|^2\right)\,dx\nonumber\\
		&\leq Ch^4e^{\frac{2}{h}e^{m_0\alpha}}\|u\|^2_{H^1(\Omega)}.
	\end{align}
Further, by applying the trace theorem, we obtain
\begin{align}\label{e13}
	h^3\int_{\Gamma_N}e^{\frac{2\varphi}{h}}|\partial_\nu \widetilde u|^2\,dS&\leq Ch^3e^{\frac{2}{h}e^{\alpha\|\psi\|_\infty}}\int_{\Gamma_N}|\partial_\nu  u|^2\,dS\nonumber\\
	&\leq Ch^3e^{\frac{2}{h}e^{\alpha\|\psi\|_\infty}}\|\partial_\nu u\|_{H^{-\frac{1}{2}}(\Gamma_N)}\|\partial_\nu u\|_{H^{\frac{1}{2}}(\Gamma_N)}\nonumber\\
	&\leq Ch^3e^{\frac{2}{h}e^{\alpha\|\psi\|_\infty}}\|u\|_{H^2(\Omega)}\|\partial_\nu u\|_{H^{-\frac{1}{2}}(\Gamma_N)}.
\end{align}

Putting	(\ref{e11}), (\ref{e12}) and (\ref{e13}) together, in view of (\ref{e10}), we have
\begin{align*}
	he^{\frac{2}{h}e^{2m_0\alpha}}\|u\|^2_{H^1_{scl}(\mathcal O_2\backslash\mathcal O_3)}\leq C\left(h^4e^{\frac{2}{h}e^{m_0\alpha}}\|u\|^2_{H^1(\Omega)}+h^3e^{\frac{2}{h}e^{\alpha\|\psi\|_\infty}}\|u\|_{H^2(\Omega)}\|\partial_\nu u\|_{H^{-\frac{1}{2}}(\Gamma_N)}
\right).
\end{align*}
Let $$\alpha_1=	e^{2m_0\alpha}-e^{m_0\alpha}>0$$
and$$	\alpha_2=	e^{\alpha\|\psi\|_\infty}-e^{2m_0\alpha}>0.$$
We conclude that 	
\begin{align*}
	\|u\|_{H^1_{scl}(\mathcal O_2\backslash\mathcal O_3)}\leq C\left(h^{\frac{3}{2}}e^{-\frac{\alpha_1}{h}}\|u\|_{H^1(\Omega)}+he^{\frac{\alpha_2}{h}}\|u\|_{H^2(\Omega)}^{\frac{1}{2}}\|\partial_\nu u\|_{H^{\frac{1}{2}}(\Gamma_N)}^{-\frac{1}{2}}
\right),
\end{align*}	
and therefore,	
\begin{align*}
	\|u\|_{H^1(\mathcal O_2\backslash\mathcal O_3)}&\leq\frac{1}{h}\|u\|_{H^1_{scl}(\mathcal O_2\backslash\mathcal O_3)}\\
	&\leq C\left(e^{-\frac{\alpha_1}{h}}\|u\|_{H^1(\Omega)}+e^{\frac{\alpha_2}{h}}\|u\|_{H^2(\Omega)}^{\frac{1}{2}}\|\partial_\nu u\|_{H^{\frac{1}{2}}(\Gamma_N)}^{-\frac{1}{2}}
\right),
\end{align*}	
for all	$k\geq1$, $0\leq h\leq\frac{h_0}{k}$, and some $\alpha_1, \alpha_2>0$ independent of $h$ and $k$.  This completes the proof of  Proposition \ref{pp}.

\end{proof}

\section{stability estimate for the inverse problem}
The main purpose of this section is to prove Theorem \ref{th} and Corollary \ref{co}. First we recall the definition of the DtN map $\Lambda_q^{\Gamma_D,\Gamma_N}(k)$:
\begin{align}\label{a1}
	\begin{split}
	\Lambda_q^{\Gamma_D,\Gamma_N}(k):\widetilde H^{\frac{1}{2}}(\Gamma_D) &\rightarrow H^{-\frac{1}{2}}(\Gamma_N),\\
	u|_{\partial\Omega}=f&\mapsto \partial_\nu u|_{\Gamma_N},	
	\end{split}
\end{align}
where ${\rm supp}f\subset \Gamma_D$
, $\widetilde H^{\frac{1}{2}}(\Gamma_D):=\{f\in H^{\frac{1}{2}}(\partial\Omega):{\rm supp}f\subset\Gamma_D\}$, $\Gamma_D$ and $\Gamma_N$ are arbitrary nonempty open subsets of $\partial\Omega$ and $u$ is the unique solution of the equation (\ref{mm}).

Let $\xi\in\mathbb{R}^n$ and $\omega_1$, $\omega_2\in\mathbb{R}^n$ be such that $$\xi\cdot\omega_1=\xi\cdot\omega_2=\omega_1\cdot\omega_2=0 \quad \mbox{and} \quad| \omega_1|=|\omega_2|=1.$$
We set $$\zeta_1=-\frac{\xi}{2}+\sqrt{k^2+a^2-\frac{|\xi|^2}{4}}\omega_1+{\rm i}a\omega_2,$$
$$\zeta_2=-\frac{\xi}{2}-\sqrt{k^2+a^2-\frac{|\xi|^2}{4}}\omega_1-{\rm i}a\omega_2,$$
and require that $k^2+a^2\geq\frac{|\xi|^2}{4}$, see \cite{isakov2014increasing}.
 It is clear that$$\zeta_j\cdot\zeta_j=k^2,\quad\zeta_1+\zeta_2=-\xi,\quad|\zeta_j|^2=k^2+2a^2, \quad\mbox{for } j=1,2.$$

Let $\widetilde\Omega$ be a bounded domain such that $\Omega\subset\subset\widetilde\Omega\subset\subset\mathbb{R}^n$, and let us extend $q_1$ and $q_2$ by zero to $\mathbb{R}^n\backslash\Omega$ and denote the extensions by $q_1$ and $q_2$ again. Then, Lemma \ref{le1} guarantees that when $a=|{\rm Im}\zeta_j|$ satisfies
\begin{align*}
	a\geq\max\{C_0M,1\}, \quad j=1,2,
\end{align*}
there exist the complex geometric optics solutions $u_j$ to
\begin{align}\label{m1}
(-\Delta-k^2+q_j)u_j=0 \quad \mbox{in} ~~\widetilde\Omega,	
\end{align}
 in the form
 \begin{align}\label{m2}
 u_j(x)=e^{{\rm i}\zeta_j\cdot x}(1+r_j(x)),
 \end{align}
 where $r_j\in L^2(\widetilde\Omega)$ satisfies
 \begin{align}\label{m7}
 	\|r_j\|_{L^2(\widetilde\Omega)}\leq\frac{C_1}{|{\rm Im}\zeta_j|}\|q\|_{L^\infty(\Omega)}, \quad j=1,2.
 \end{align}
Now, we are ready to prove Theorem \ref{mm}.

\begin{proof}[Proof of Theorem \ref{mm}]
We will combine the ideas of \cite{garcia2021runge} and \cite{krupchyk2019stability}, \cite{joud2009stability}.
We first recall the three open subsets $\mathcal O_j (j=1,2,3)$ of $\mathcal O$, which are also neighborhood of $\partial\Omega$ with $C^{\infty}$ boundary,  such that
$$\partial\Omega\subset \partial\mathcal O_j,\quad  \overline{\mathcal O}_{j+1}\subset \mathcal O_j, \quad \overline{\mathcal O}_j\subset \mathcal O, \quad  j=1,2,3.$$
Let $k\geq1$ and let $u_2\in H^2(\widetilde\Omega)$ satisfy (\ref{m1}) with the form (\ref{m2}).

Next, we seek to approximate $u_2$ up to some order $\epsilon>0$ which will be chosen later. We apply Lemma \ref{le4} with $\Omega_2=\Omega$, $\Omega_1=\Omega\backslash\overline{\mathcal O}_1$ and $\widetilde\Omega_1=\Omega\backslash\overline{\mathcal O}_2$. This yields solutions $\widetilde u_2\in H^1(\Omega)$ to
\begin{align*}
(-\Delta-k^2+q_2)\widetilde u_2=0 \quad \mbox{in} ~~\Omega,	
\end{align*}
with $\widetilde u_2|_{\partial \Omega}\in \widetilde H^{\frac{1}{2}}(\Gamma_D)$ supported in $\Gamma_D$ and
\begin{align}\label{m4}
	\|\widetilde u_2-u_2\|_{L^2(\Omega\backslash{\overline{\mathcal O}_1})}\leq\epsilon\|u_2\|_{H^1(\Omega\backslash{\overline{\mathcal O}_2})},
\end{align}
\begin{align}\label{m5}
	\|\widetilde u_2\|_{H^{\frac{1}{2}}(\Gamma_D)}\leq Ce^{Ck}\epsilon^{-\mu}\|u_2\|_{L^2(\Omega\backslash{\overline{\mathcal O}_1})},
\end{align}
with $C>1$.
Further, we set $f=\widetilde u_2|_{\partial \Omega}\in \widetilde H^{\frac{1}{2}}(\Gamma_D)$, then supp$f\subset\Gamma_D$.

Let $v\in H^1(\Omega)$ be the solution to the following problem
\begin{align*}
	(-\Delta-k^2+q_1)v&=0 \quad\mbox{in} ~~\Omega,\\
	v&=f\quad\mbox{on}~~\partial\Omega.
\end{align*}
Then, by defining $u=v-\widetilde u_2\in H^1(\Omega)$, we get
\begin{align}\label{a3}
	\begin{split}
		(-\Delta-k^2+q_1)u&=(q_2-q_1)\widetilde u_2 \quad\mbox{in} ~~\Omega,\\
	u&=0\quad\mbox{on}~~\partial\Omega.
	\end{split}
\end{align}
Since $q_1, q_2\in L^{\infty}(\Omega)$ and $\widetilde u_2\in H^1(\Omega), u\in H^1(\Omega)$, we can see that $u\in H^2(\Omega)$ by the theory of elliptic equations.
To proceed, we introduce a cut-off function $\chi_2\in C_0^\infty(\Omega)$ satisfying $0\leq\chi_2\leq1$ and
\begin{align}\label{m6}
	\chi_2(x)=
	\begin{cases}
		0,\quad x\in\mathcal O_3,\\
		1,\quad x\in \overline\Omega\backslash\mathcal O_2.
	\end{cases}
\end{align}
Thus, if we put $\widetilde u=\chi_2 u$, we see that $\widetilde u$ is the solution of the following problem
\begin{align}\label{m8}
	\begin{split}
		(-\Delta-k^2+q_1)\widetilde u&=(q_1-q_2)\widetilde u_2+[-\Delta,\chi_2]u \quad\mbox{in} ~~\Omega,\\
	\widetilde u&=0\quad\mbox{on}~~\partial\Omega,
	\end{split}
\end{align}
where we have used $\chi_2(q_1-q_2)=q_1-q_2$ in $\Omega$ by noting $q_1=q_2$ in ${\mathcal O}$ and (\ref{m6}).

Next, we recall that $u_1$ is a solution to
$$(-\Delta-k^2+q_1)u_1=0 \quad \mbox{in} ~~\widetilde\Omega,$$
 in the form $$
 u_1(x)=e^{{\rm i}\zeta_1\cdot x}(1+r_1(x)),$$
and $r_1(x)$ satisfies (\ref{m7}) with $j=1$. By applying Lemma \ref{le2} to (\ref{m1}), we can see that $u_1\in H^2(\Omega)$.

Multiplying the first equation of (\ref{m8}) by $u_1$ and integrating by parts over $\Omega$, we obtain
\begin{align}\label{a9}
	\int_\Omega(q_2-q_1)u_1\widetilde u_2\,dx=\int_\Omega[-\Delta,\chi_2]u u_1\,dx,
\end{align}
where we have noted that $\widetilde u_2|_{\partial\Omega}=\partial_{\nu}\widetilde u_2|_{\partial\Omega}=0$ by the definition of $\chi_2$ in (\ref{m6}).
Further, we apply Proposition \ref{pp} and use (\ref{m6}) to get
\begin{align}\label{a2}
	\left|\int_\Omega[-\Delta,\chi_2]u u_1\,dx\right|&\leq \|[-\Delta,\chi_2]u\|_{L^2(\mathcal O_2\backslash\mathcal O_3)}\|u_1\|_{L^2(\Omega)}\nonumber\\
	&\leq C\|u\|_{H^1(\mathcal O_2\backslash\mathcal O_3)}\|u_1\|_{L^2(\Omega)}\nonumber\\
	&\leq C\left(e^{-\frac{\alpha_1}{h}}\|u\|_{H^1(\Omega)}+e^{\frac{\alpha_2}{h}}\|u\|_{H^2(\Omega)}^{\frac{1}{2}}\|\partial_\nu u\|_{H^{-\frac{1}{2}}(\Gamma_N)}^{\frac{1}{2}}\right)\|u_1\|_{L^2(\Omega)},
\end{align}
where $\alpha_1>0$, $\alpha_2>0$ and $0<h\leq\frac{h_0}{k}$ for $k\geq1$, $0<h_0\leq \min \left\{1, \frac{\alpha_1}{2}\right\}$.

By using the mapping properties of the partial DtN map (\ref{a1}), the trace theorem and (\ref{m5}), we obtain
\begin{align}\label{a4}
	\|\partial_\nu u\|_{H^{-\frac{1}{2}}(\Gamma_N)}&=\|\partial_\nu (v-\widetilde u_2)\|_{H^{-\frac{1}{2}}(\Gamma_N)}\nonumber\\
	&=\|\left(\Lambda_{q_1}^{\Gamma_D,\Gamma_N}(k)-\Lambda_{q_2}^{\Gamma_D,\Gamma_N}(k)\right)\widetilde u_2\|_{H^{-\frac{1}{2}}(\Gamma_N)}\nonumber\\
	&\leq C\|\Lambda_{q_1}^{\Gamma_D,\Gamma_N}(k)-\Lambda_{q_2}^{\Gamma_D,\Gamma_N}(k)\|_{\widetilde H^{\frac{1}{2}}(\Gamma_D) \rightarrow H^{-\frac{1}{2}}(\Gamma_N)}\|\widetilde u_2\|_{H^{\frac{1}{2}}(\Gamma_D)}\nonumber\\
	&\leq Ce^{Ck}\epsilon^{-\mu}\|\Lambda_{q_1}^{\Gamma_D,\Gamma_N}(k)-\Lambda_{q_2}^{\Gamma_D,\Gamma_N}(k)\|_{\widetilde H^{\frac{1}{2}}(\Gamma_D) \rightarrow H^{-\frac{1}{2}}(\Gamma_N)}\|u_2\|_{L^2(\Omega)},
\end{align}
 with $C>1$.

 Let us now bound $\|u\|_{H^1(\Omega)}$ in (\ref{a2}). First, we recall that $u$ satisfies (\ref{a3}) and use Lemma \ref{le3}. We get that for all $k\geq1$ satisfying (a2),
$$
	\|u\|_{H^1(\Omega)}\leq Ck^{n+1}\|(q_2-q_1)\widetilde u_2\|_{L^2(\Omega)}\leq Ck^{n+1}\|\widetilde u_2\|_{L^2(\Omega\backslash\overline{\mathcal O}_1)}.
$$
In the last inequality, we have used  $q_1=q_2$ in ${\mathcal O}$.
Then, in view of (\ref{m4}), we have
\begin{align*}
	\|\widetilde u_2\|_{L^2(\Omega\backslash\overline{\mathcal O}_1)}&\leq\|\widetilde u_2-u_2\|_{L^2(\Omega\backslash{\overline{\mathcal O}_1})}+\|u_2\|_{L^2(\Omega\backslash{\overline{\mathcal O}_1})}\\
&\leq\epsilon\|u_2\|_{H^1(\Omega)}+\|u_2\|_{L^2(\Omega)}.
\end{align*}
Thus, we obtain
\begin{align}\label{a5}
	\|u\|_{H^1(\Omega)}\leq Ck^{n+1}\left(\epsilon\|u_2\|_{H^1(\Omega)}+\|u_2\|_{L^2(\Omega)}\right).
\end{align}

As for the estimate of $\|u\|_{H^2(\Omega)}$, by a priori estimate for (\ref{a3})  and Lemma \ref{le3}, we get
\begin{align}\label{a6}
\|u\|_{H^2(\Omega)}&\leq C\|(q_2-q_1)\widetilde u_2+(k^2-q_1)u\|_{L^2(\Omega)}\nonumber\\
&\leq C\left(\|(q_2-q_1)\widetilde u_2\|_{L^2(\Omega)}+k^{2}\| u\|_{L^2(\Omega)}\right)\nonumber\\
&\leq C\left(\|(q_2-q_1)\widetilde u_2\|_{L^2(\Omega)}+k^{n+2}\|(q_2-q_1)\widetilde u_2\|_{L^2(\Omega)}\right)\nonumber\\
&\leq Ck^{n+2}\|\widetilde u_2\|_{L^2(\Omega\backslash\overline{\mathcal O}_1)}\nonumber\\
&\leq Ck^{n+2}\left(\epsilon\|u_2\|_{H^1(\Omega)}+\|u_2\|_{L^2(\Omega)}\right).
\end{align}
In the fourth inequality, we have also used  $q_1=q_2$ in ${\mathcal O}$.

Let $R>0$ be such that $\widetilde\Omega$ is contained in a ball centered at zero of radius $R$. Then thanks to (\ref{m1}), (\ref{m2}) and (\ref{m7}), we have
\begin{align}\label{a7}
	\|u_j\|_{L^2(\Omega)}\leq \|u_j\|_{L^2(\widetilde\Omega)}\leq Ce^{aR}, \quad j=1,2.
\end{align}
Furthermore, by applying Lemma \ref{le2} to (\ref{m1}),
 we get
 \begin{align}\label{a8}
 	\|u_j\|_{H^1(\Omega)}\leq Ck\|u_j\|_{L^2(\widetilde\Omega)}\leq Cke^{aR} ,\quad j=1,2.
 \end{align}
Collecting the identities and the estimates  (\ref{a2})-(\ref{a8}), we obtain that
\begin{align}\label{a12}
	\left|\int_\Omega[-\Delta,\chi_2]u u_1\,dx\right|\leq  Ce^{2aR-\frac{\alpha_1}{h}}k^{n+1}(\epsilon k+1)+Ce^{2aR+\frac{\alpha_2}{h}} e^{Ck}\epsilon^{-\frac{\mu}{2}}\delta^{\frac{1}{2}}(\epsilon^{\frac{1}{2}}k^{\frac{1}{2}}+1),
\end{align}
for some constant $\mu>1$ and all $k\geq1$ satisfying (a2), $0<h\leq\frac{h_0}{k}$, $h_0\in \left(0, \min \left\{1, \frac{\alpha_1}{2}\right\}\right]$, $a\geq\max\{C_0M,1\}$, $\epsilon>0$. Here, we abbreviate $\delta:=\|\Lambda_{q_1}^{\Gamma_D,\Gamma_N}(k)-\Lambda_{q_2}^{\Gamma_D,\Gamma_N}(k)\|_{\widetilde H^{\frac{1}{2}}(\Gamma_D) \rightarrow H^{-\frac{1}{2}}(\Gamma_N)}$.

Now it follows from (\ref{a9}) and (\ref{a12}) by choosing $0<\epsilon<1$ that
\begin{align}\label{a10}
	\left|\int_\Omega(q_2-q_1)u_1\widetilde u_2\,dx\right|\leq C\left(e^{2aR-\frac{\alpha_1}{h}}k^{n+2}+e^{2aR+\frac{\alpha_2}{h}} e^{Ck}\epsilon^{-\frac{\mu}{2}}\delta^{\frac{1}{2}}\right),
\end{align}
for  all $k\geq1$ satisfying (a2), $0<h\leq\frac{h_0}{k}$, $h_0\in \left(0, \min \left\{1, \frac{\alpha_1}{2}\right\}\right]$, $a\geq\max\{C_0M,1\}$, $0<\epsilon<1$.
Take $h=\frac{1}{a\widetilde{\gamma} }$ in (\ref{a10}) and choose a constant $\widetilde{\gamma} >0$ sufficiently large so that
$$e^{2aR-\frac{\alpha_1}{2}a\widetilde{\gamma}}\leq e^{-\alpha_3 a}$$
and
\begin{align}\label{q}
	e^{2aR+\alpha_2a\widetilde{\gamma}}\leq e^{\alpha_4 a},
\end{align}
for some constants $\alpha_3>0$ and $\alpha_4>0$. It is easy to see that $0<h\leq\frac{h_0}{k}$ implies that $a\geq \frac{1}{\widetilde{\gamma} h_0}k$.

We conclude from (\ref{a10}) that for all $k\geq1$ satisfying (a2), $a\geq\max\left\{C_0M, \frac{1}{\widetilde{\gamma} h_0}k, 1\right\}$, $0<\epsilon<1$,
\begin{align*}
	\left|\int_\Omega(q_2-q_1)u_1\widetilde u_2\,dx\right|\leq C\left(e^{-\frac{\alpha_1}{2}a\widetilde{\gamma}-\alpha_3a}k^{n+2}+e^{\alpha_4 a} e^{Ck}\epsilon^{-\frac{\mu}{2}}\delta^{\frac{1}{2}}\right).
\end{align*}
By $h_0\leq \frac{\alpha_1}{2}$ and $a\geq \frac{1}{\widetilde{\gamma} h_0}k$, we have $e^{-\frac{\alpha_1}{2}a\widetilde{\gamma}}\leq e^{-h_0 a\widetilde{\gamma} }\leq e^{-k}$. Consequently, we obtain
\begin{align}\label{Tjbds1}
	\left|\int_\Omega(q_2-q_1)u_1\widetilde u_2\,dx\right|&\leq C\left(e^{-\frac{\alpha_1}{2}a\widetilde{\gamma}-\alpha_3a}k^{n+2}+e^{\alpha_4 a} e^{Ck}\epsilon^{-\frac{\mu}{2}}\delta^{\frac{1}{2}}\right)\notag\\
     &\leq C\left(e^{-\alpha_3a}e^{-k}k^{n+2}+e^{\alpha_4 a} e^{Ck}\epsilon^{-\frac{\mu}{2}}\delta^{\frac{1}{2}}\right)\notag\\
     &\leq C\left(e^{-\alpha_3a}+e^{\alpha_4 a} e^{Ck}\epsilon^{-\frac{\mu}{2}}\delta^{\frac{1}{2}}\right)\notag\\
     &\leq C\left(\frac{1}{a}+e^{\alpha_4 a} e^{Ck}\epsilon^{-\frac{\mu}{2}}\delta^{\frac{1}{2}}\right),
\end{align}
for all $k\geq1$ satisfying (a2), $a\geq\max\left\{C_0M, \frac{1}{\widetilde{\gamma} h_0}k, 1\right\}$, $0<\epsilon<1$.

Recalling that we extended $q_1$, $q_2$ by zero to $\mathbb R^n$, thus, we seek to apply the previous results to estimate $$\left|\mathcal F(q_2-q_1)(\xi)\right|=\left|\int_\Omega(q_2-q_1)e^{-{\rm i}\xi\cdot x}\,dx\right|,$$
for any $|\xi|\leq2\sqrt{k^2+a^2}$. Notice that
\begin{align*}
	e^{-{\rm i}\xi\cdot x}&=u_1u_2-e^{-{\rm i}\xi\cdot x}
(r_1+r_2+r_1r_2)\\
&=u_1\widetilde u_2+u_1(u_2-\widetilde u_2)-e^{-{\rm i}\xi\cdot x}
(r_1+r_2+r_1r_2).
\end{align*}
Thus, we have
\begin{align}\label{z1}
	\left|\mathcal F(q_2-q_1)(\xi)\right|\leq &\left|\int_\Omega(q_2-q_1)u_1\widetilde u_2\,dx\right|+\left|\int_\Omega(q_2-q_1)u_1(u_2-\widetilde u_2)\,dx\right|\nonumber\\
	&+\left|\int_\Omega(q_2-q_1)e^{-{\rm i}\xi\cdot x}
(r_1+r_2+r_1r_2)\,dx\right|\nonumber\\
:=&R_1+R_2+R_3.
\end{align}

By (\ref{Tjbds1}), $R_1$ satisfies
\begin{align}\label{z2}
	R_1\leq C\left(\frac{1}{a}+e^{\alpha_4 a} e^{Ck}\epsilon^{-\frac{\mu}{2}}\delta^{\frac{1}{2}}\right).
\end{align}

As for $R_2$, we use the Runge approximation bound (\ref{m4}) again and invoke the estimates (\ref{a7}), (\ref{a8}) to get that
\begin{align}
	R_2\leq C\|u_1\|_{L^2(\Omega\backslash\overline{\mathcal O}_1)}\|u_2-\widetilde u_2\|_{L^2(\Omega\backslash\overline{\mathcal O}_1)}	\leq Ce^{2aR}\epsilon k\leq Ce^{\alpha_4 a}\epsilon k,
\end{align}
by noting $q_1=q_2$ in $\Omega\backslash\overline{\mathcal O}_1$ and (\ref{q}).

Further, applying the previous estimate (\ref{m7}), we obtain
\begin{align}\label{z3}
	R_3\leq C\left(\|r_1\|_{L^2(\widetilde\Omega)}+\|r_2\|_{L^2(\widetilde\Omega)}+\|r_1\|_{L^2(\widetilde\Omega)}\|r_2\|_{L^2(\widetilde\Omega)}\right)\leq\frac{C}{a}.
\end{align}
It follows from (\ref{z1}) with the help of (\ref{z2})-(\ref{z3}) that

\begin{align}\label{z3O}
	\left|\mathcal F(q_2-q_1)(\xi)\right|\leq C\left(e^{\alpha_4 a}\epsilon k+e^{\alpha_4 a} e^{Ck}\epsilon^{-\frac{\mu}{2}}\delta^{\frac{1}{2}}+\frac{1}{a}\right),
\end{align}
for all $k\geq1$ satisfying (a2), $a\geq\max\left\{C_0M, \frac{1}{\widetilde{\gamma} h_0}k, 1\right\}$, $\xi\in\mathbb R^n$ such that $|\xi|\leq2\sqrt{k^2+a^2}$ and $0<\epsilon<1$ to be chosen.

In order to estimate $\|q_2-q_1\|_{H^{-1}(\Omega)}$, we
take $\tau\leq2\sqrt{k^2+a^2}$ to be chosen and use (\ref{z3O}), together with Parseval's formula to  get
\begin{align}\label{z4}
	\|q_2-q_1\|_{H^{-1}(\Omega)}^2&\leq\int_{|\xi|\leq\tau}\frac{\left|\mathcal F(q_2-q_1)(\xi)\right|^2}{1+|\xi|^2}\,d\xi+\int_{|\xi|\geq\tau}\frac{\left|\mathcal F(q_2-q_1)(\xi)\right|^2}{1+|\xi|^2}\,d\xi\nonumber\\
	&\leq C\tau^n\left(e^{2\alpha_4 a}\epsilon^2 k^2+e^{2\alpha_4 a} e^{Ck}\epsilon^{-\mu}\delta+\frac{1}{a^2}\right)+\frac{C}{\tau^2}.
\end{align}
Setting $\tau=a^{\frac{2}{n+2}}$, (\ref{z4}) gives that
\begin{align*}
	\|q_2-q_1\|_{H^{-1}(\Omega)}^2&\leq C\left(a^{\frac{2n}{n+2}}e^{2\alpha_4 a}\epsilon^2 k^2+a^{\frac{2n}{n+2}}e^{2\alpha_4 a} e^{Ck}\epsilon^{-\mu}\delta+a^{-\frac{4}{n+2}}\right)\\
	&\leq C\left(e^{6\alpha_4 a}\epsilon^2+e^{4\alpha_4 a} e^{Ca}\epsilon^{-\mu}\delta+a^{-\frac{4}{n+2}}\right),
\end{align*}
where we used $k\leq a\widetilde{\gamma} h_0$. Noting $0<\delta<1$, we choose
 $$\epsilon=\delta^{\frac{1}{\mu+2}}\in (0,1),$$
which results in
\begin{align*}
	\|q_2-q_1\|_{H^{-1}(\Omega)}^2\leq C\left(e^{(6\alpha_4+C)a}\delta^{\frac{2}{\mu+2}}+a^{-\frac{4}{n+2}}\right).
\end{align*}
Recalling that $a\geq\max\left\{C_0M, \frac{1}{\widetilde{\gamma} h_0}k, 1\right\}$ and setting $$a=\frac{1}{6\alpha_4+C}\log{(\delta^{-\frac{1}{\mu+2}})}+\frac{1}{\widetilde{\gamma} h_0}k+C_0M+1,$$
we obtain
\begin{align}\label{z5}
	\|q_2-q_1\|_{H^{-1}(\Omega)}^2\leq \frac{C}{\left(k+\log\frac{1}{\delta}\right)^{\frac{4}{n+2}}}+e^{Ck}\delta^\frac{1}{\mu+2},
\end{align}
where $\mu>1$.
Applying Young's inequality, we can estimate the last term as follows:
$$e^{Ck}\delta^\frac{1}{\mu+2}\leq C\left(e^{Ck}\delta^2+k^{-1}\delta^\frac{2}{4\mu+7}\right).$$
Taking into account that
$$k^{-1}\delta^\beta\leq \left(k+\beta\log\frac{1}{\delta}\right)^{-\frac{4}{n+2}}$$
for $0<\beta<1$, we conclude from (\ref{z5}) that for all $k\geq 1$ satisfying (a2),
\begin{align*}
	\|q_2-q_1\|_{H^{-1}(\Omega)}\leq e^{Ck}\delta+\frac{C}{\left(k+\log\frac{1}{\delta}\right)^{\frac{2}{n+2}}}.
\end{align*}
This completes the proof of Theorem \ref{th}.

\end{proof}

\begin{proof}[Proof of Corollary \ref{co}]We follow the classical argument due to \cite{alessandrini1988stable}, see also \cite{caro2016stability}. Let $\varepsilon>0$ be such that $s=\frac{n}{2}+2\varepsilon$. Then by the Sobolev embedding, interpolation and the a priori bounds for $q_j$, we get
\begin{align*}
	\|q_2-q_1\|_{L^\infty(\Omega)}\leq & C\|q_2-q_1\|_{H^{\frac{n}{2}+\varepsilon}(\Omega)}\\
	\leq &C\|q_2-q_1\|_{H^{-1}(\Omega)}^{\frac{\varepsilon}{1+s}}\|q_2-q_1\|_{H^s(\Omega)}^{\frac{1-\varepsilon+s}{s+1}}\\
	\leq &C(2M)^{\frac{1-\varepsilon+s}{s+1}}\|q_2-q_1\|_{H^{-1}(\Omega)}^{\frac{\varepsilon}{1+s}}\\
	\leq &\left(e^{Ck}\delta+\frac{C}{\left(k+\log\frac{1}{\delta}\right)^{\frac{2}{n+2}}}\right)^{\frac{s-\frac{n}{2}}{2(s+1)}}.
\end{align*}
This completes the proof of Corollary \ref{co}.	
\end{proof}

\appendix

\section{proof of Lemma \ref{c1}}
We adapt slightly the proof of \cite[Theorem 2.1]{krupchyk2019stability} to our case.
We shall proceed by following the arguments of \cite{Fursikov1996Control}, as presented in \cite[Theorem 4.3.9]{le2012carleman}, \cite[Theorem 2.1]{krupchyk2019stability}. By density, it is suffices to prove (\ref{app}) for $v\in C^\infty(\overline\Omega)$. We write
$$P_\Phi(h,E)=A_2+{\rm i}A_1,$$
where $\Phi':=\nabla\Phi$ and
$$A_2=(hD)^2-|\Phi'|^2-E,\quad A_1=2\Phi'\cdot hD-{\rm i}h\Delta\Phi.$$
Here $D=\frac{1}{{\rm i}}\nabla $. The idea of Fursikov and Imanuvilov \cite{Fursikov1996Control} is the following: rather than considering the equation $P_\Phi(h,E)v=g$, one works with $$(A_2+{\rm i}\underline{A_1})v=g+h\theta\Delta\Phi u,$$where $\theta>0$ is to be chosen and $$\underline{A_1}:=\frac{1}{{\rm i}}[2\Phi'\cdot h\nabla+h(\theta+1)\Delta\Phi].$$
Following \cite{Fursikov1996Control}, \cite[Theorem 4.3.9]{le2012carleman}, we get

\begin{align}\label{1}
\begin{split}
	\|g+h\theta\Delta\Phi v\|^2_{L^2(\Omega)}&=\|A_2v\|^2_{L^2(\Omega)}+\|\underline{A_1}v\|^2_{L^2(\Omega)}+2{\rm Re}(A_2v,{\rm i}\underline{A_1}v)_{L^2(\Omega)}\\
	&\geq 2{\rm Re}(A_2v,{\rm i}\underline{A_1}v)_{L^2(\Omega)}.
\end{split}
\end{align}

We shall next compute
\begin{align}\label{2}
{\rm Re}(A_2v,{\rm i}\underline{A_1}v)_{L^2(\Omega)}={\rm Re}\int_\Omega((hD)^2v-|\Phi'|^2v-Ev)(2\Phi'\cdot h\nabla \overline v+h(\theta+1)\Delta\Phi \overline v)\,dx	
\end{align}
In doing so, as in \cite[Theorem 4.3.9]{le2012carleman}, we write the integral in (\ref{2}) as a sum of six terms $I_{jk}$, $1\leq j\leq 3$, $1\leq k\leq 2$, where
$I_{jk}$ is the $L^2$ scalar product of the $j$th term in the expression of $A_2v$ and the $k$th term in the expression of
${\rm i}\underline{A_1}v$. Furthermore, we recall the extra assumption $v|_{\partial\Omega}=0$ in our Lemma.

For the term $I_{11}$ in (\ref{2}), performing two integration by parts, as in  \cite[Theorem 4.3.9]{le2012carleman}, we get
\begin{align*}
	I_{11}&={\rm Re}\int_\Omega(-h^2\Delta v) (2\Phi'\cdot h\nabla \overline v)\,dx=2h^3\int_\Omega\Phi''\nabla v\cdot\nabla \overline v\,dx\\
	&-h^3\int_\Omega\Delta\Phi|\nabla v|^2\,dx-h^3\int_{\partial\Omega}\partial_\nu\Phi|\nabla v|^2\,dS+2h^3Re\int_{\partial\Omega}(\partial_\nu v)\Phi'\cdot\nabla\overline v\,dS.
\end{align*}
For the term $I_{12}$ in (\ref{2}), performing an integration by parts, as in \cite[Theorem 4.3.9]{le2012carleman}, we obtain that

\begin{align*}
	I_{12}=&{\rm Re}\int_\Omega(-h^2\Delta v) h(\theta+1)(\Delta\Phi) \overline v\,dx\\=&h^3(\theta+1)\int_\Omega\Delta\Phi|\nabla v|^2\,dx+h^3(\theta+1)Re\int_\Omega(\nabla v\cdot\nabla\Delta\Phi)\overline u\,dx.
\end{align*}
For the term $I_{21}$ in (\ref{2}), proceeding as in \cite[Theorem 4.3.9]{le2012carleman}, and performing an integration by parts, we get
\begin{align*}
	I_{21}=-2{\rm Re}\int_\Omega|\Phi'|^2v\Phi'\cdot h\nabla\overline v\,dx=h\int_\Omega\nabla\cdot(|\Phi'|^2\Phi')|v|^2\,dx.
\end{align*}
For the term $I_{22}$ in (\ref{2}), we have
\begin{align*}
	I_{22}=-{\rm Re}\int_\Omega|\Phi'|^2 vh(\theta+1)(\Delta\Phi)\overline v\,dx=-h(\theta+1)\int_\Omega(\Delta\Phi)|\Phi'|^2|v|^2\,dx.
\end{align*}
Finally, using that $\Phi'\cdot\nabla|v|^2=2{\rm Re}(v\Phi'\cdot\nabla\overline v)$, and integrating by parts, we get
\begin{align*}
	I_{31}+I_{32}&=-{\rm Re}\int_\Omega Ev(2\Phi'\cdot h\nabla\overline v+h(\theta+1)\Delta\Phi\overline v)\,dx\\
	&=-hE\int_\Omega\Phi'\cdot\nabla|v|^2\,dx-hE(\theta+1)\int_\Omega\Delta\Phi|v|^2\,dx\\
	&=-hE\theta\int_\Omega\Delta\Phi|v|^2\,dx.
\end{align*}
Collecting all the terms together, we obtain that
\begin{align}\label{3}
	{\rm Re}(A_2v,{\rm i}\underline{A_1}v)_{L^2(\Omega)}=h\int_\Omega\widetilde\alpha_0|v|^2\,dx+h^3\int_\Omega\alpha_1|\nabla v|^2\,dx+X+bt,
\end{align}
where
\begin{align}\label{4}
&\widetilde\alpha_0=\alpha_0-E\theta\Delta\Phi,\quad \alpha_0=\nabla\cdot(|\Phi'|^2\Phi')-(\theta+1)(\Delta\Phi)|\Phi'|^2, \quad \alpha_1=\theta\Delta\Phi,\nonumber\\	
&X=2h^3\int_\Omega\Phi''\nabla v\cdot\nabla\overline v\,dx+h^3(\theta+1){\rm Re}\int_\Omega(\nabla\Delta\Phi\cdot\nabla v)\overline v\,dx,\nonumber\\
&bt=-h^3\int_{\partial\Omega}\partial_\nu\Phi|\nabla v|^2\,dS+2h^3Re\int_{\partial\Omega}(\partial_\nu v)\Phi'\cdot\nabla\overline v\,dS.
\end{align}
Now by Lemma 4.3.10 in \cite{le2012carleman}, we have
\begin{align}\label{5}
	\alpha_0\geq C\gamma^4\Phi^3,
\end{align}
provided $\theta<2$. Assuming that $\gamma\geq1$ and using that $\Psi\geq0$ on $\overline\Omega$, we get for all $0\leq E\leq1$,
\begin{align}\label{6}
	|E\theta\Delta\Phi|\leq|\theta(\gamma^2|\Psi'|^2\Phi+\gamma\Delta\Psi\Phi)|\leq C\theta\gamma^3\Phi^3.
\end{align}
It follows from (\ref{5}) and (\ref{6}) that $\widetilde\alpha_0\geq C\gamma^4\Phi^3$ for $\gamma>1$ sufficiently large. As $\theta>0$, we also have $\alpha_1\geq C\gamma^2\Phi$ for $\gamma$ sufficiently large. Hence, fixing $\theta=1$, we conclude from (\ref{3}), by absorbing the remainder term $X$ as explained in \cite{le2012carleman}, that for all $h>0$ small enough, all $\gamma$ large enough, and $0\leq E\leq 1$,
\begin{align}\label{7}
	{\rm Re}(A_2v,{\rm i}\underline{A_1}v)_{L^2(\Omega)}\geq Ch\gamma^4\int_\Omega\Phi^3|v|^2\,dx+Ch\gamma^2\int_\Omega\Phi|h\nabla v|^2\,dx-|bt|.
\end{align}
It follows from (\ref{4}) and $v|_{\partial\Omega}=0$ which implies $|\nabla v|=|\partial_\nu v|$ that for all $0\leq E\leq 1$,
\begin{align}\label{8}
	bt=&-h^3\int_{\partial\Omega}\gamma\Phi\partial_\nu\Psi|\partial_\nu v|^2\,dS+2h^3Re\int_{\partial\Omega}(\partial_\nu v)\gamma\Phi\nabla\Psi\cdot(\nu\partial_\nu\overline v)\,dS\nonumber\\
	=&-h^3\int_{\partial\Omega}\gamma\Phi\partial_\nu\Psi|\partial_\nu v|^2\,dS+2h^3Re\int_{\partial\Omega}\gamma\Phi(\nabla\Psi\cdot\nu)|\partial_\nu v|^2\,dS\nonumber\\
	=&h\gamma\int_{\partial\Omega}\Phi\partial_\nu\Psi|h\partial_\nu v|^2\,dS.
\end{align}
Combining (\ref{1}), (\ref{7}) and (\ref{8}) and absorbing the term $h^2\theta^2\|\Delta\Phi v\|^2_{L^2(\Omega)}$ by choosing $h$ small enough independent of $\gamma$, we get (\ref{app}). This completes the proof of Lemma \ref{c1}.

\end{document}